\theoremstyle{plain}
\newtheorem{theorem}{Theorem}
\newtheorem{corollary}{Corollary}
\newtheorem{lemma}{Lemma}
\newtheorem{proposition}{Proposition}
\newtheorem{conj}{Conjecture}
\newtheorem*{conj1}{Conjecture~\ref{conj:hard}}
\theoremstyle{remark}
\newtheorem{rem}[theorem]{Remark}
\theoremstyle{definition}
\newcommand{\bin}[2]{
 \left(
   \begin{array}{@{}c@{}}
      #1 \\ #2
  \end{array}
 \right)  }
\newcommand{\Z}{\mathbb{Z}}
\newcommand{\F}{\mathbb{F}}
\title[Heegaard Floer homology of torus links]{Heegaard Floer homology of $(n,n)$-torus links: Computations and Questions}
\author[J. Licata]{Joan E. Licata} \address{Mathematical Sciences Institute, Building 27,
  The Australian National University,  ACT 0200, Australia} \email{joan.licata@anu.edu.au}\thanks{While writing this paper, the author was  partially supported by NSF grant DMS-1237324.  This material is also based upon work supported by the NSF under agreement No. DMS-0635607.}
\begin{document}

\maketitle

\begin{abstract}
In this article we study the Heegaard Floer link homology of  $(n, n)$-torus links.  The Alexander multigradings which support non-trivial homology form a string of $n-1$ hypercubes in $\mathbb{Z}^{n}$, and we compute the ranks and gradings of the homology in nearly all Alexander gradings.  We also conjecture a complete description of the link homology and provide some support for this conjecture.  This article is taken from the author's 2007 Ph.D. thesis and contains several open questions.
\end{abstract}

\section{Introduction}
This article focuses on the family of $(n, n)$-torus links, which we denote by $T_n$.  Interest in this family of links stems from two sources.  On the one hand, previous work has shown close relationships between the Heegaard Floer invariants of cabled links and those of the pattern and satellite  \cite{Hom}, \cite{Hed1}, \cite{Hed2}, and this work expands the library of torus link invariants for reference purposes.  On the other hand, the torus links exhibit a great degree of symmetry, both individually and as a family, and these relationships may be exploited to get maximal bang for the computational buck.  In a sense, these links serve to demonstrate  how much information is provided by formal properties of Heegaard Floer invariants.  This article is based on unpublished material from the author's Ph.D. thesis which we present here in part as an invitation to answer some open questions.

We orient  $T_n$ so that if the link were embedded on a torus, all the components would intersect a fixed meridian with the same sign. The Heegaard Floer invariant $\widehat{HFL}(S^3, T_n)$ admits a relative $\mathbb{Z}^{n+1}$ multigrading, with a homological  $\mathbb{Z}$ grading and an n-component Alexander  grading.  Our first theorem gives a geometric description of the absolute Alexander gradings which support non-vanishing homology.

  \begin{theorem}\label{thm:alexsup}
  The Alexander support of $\widehat{HFL}(S^3, T_n; \mathbb{Z}_2)$ consists of unit hypercubes centered at $(\frac{-n}{2}+k,  \dots, \frac{-n}{2}+k)$ for $k\in \{1, 2, \dots, n-1\}$.
   \end{theorem}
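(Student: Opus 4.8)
\emph{Sketch of a proof.} The plan is to sandwich the Alexander support between an outer bound coming from the Thurston norm of the link exterior and an inner bound coming from the structure of link Floer homology for L-space links, and to check that the two coincide once one intersects with the Alexander lattice. The geometric input is that $T_n$ is isotopic to $n$ distinct fibres of the Hopf fibration $S^3\to S^2$, so its exterior $M=S^3\setminus\nu(T_n)$ is the Hopf bundle restricted to an $n$-times punctured sphere, and hence, being an $S^1$-bundle over a surface with boundary, $M\cong S^1\times\Sigma_{0,n}$. First I would note that in $H_1(M)$ the Seifert fibre class $f$ equals $\mu_1+\cdots+\mu_n$ (two Hopf fibres link once) and compute the Thurston seminorm of $M$: a horizontal surface is a copy of $\Sigma_{0,n}$ with $-\chi=n-2$ and, by Thurston's computation of the norm of a Seifert fibred space, it is norm-minimising, so $x(\phi)=(n-2)\,|\phi(f)|=(n-2)\,|\sum_i\phi(\mu_i)|$ for all $\phi\in H^1(M;\R)$.

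For the outer bound I would invoke Ozsv\'ath--Szab\'o's theorem that $\widehat{HFL}$ detects the Thurston norm, which asserts that the width of the convex hull of the Alexander support in each direction $\phi$ equals $\tilde x(\phi):=x(\phi)+\sum_i|\phi(\mu_i)|=(n-2)\,|\sum_i a_i|+\sum_i|a_i|$ with $a_i=\phi(\mu_i)$. Since the conjugation symmetry of $\widehat{HFL}$ centres this polytope at the origin, it is determined by $\tilde x$: rewriting $\tfrac12\tilde x$ as a maximum over $\epsilon_0,\dots,\epsilon_n\in\{\pm1\}$ of the linear functionals $a\mapsto\langle\tfrac{n-2}2\epsilon_0(1,\dots,1)+\tfrac12(\epsilon_1,\dots,\epsilon_n),\,a\rangle$ exhibits the polytope as $\mathrm{conv}(C_1\cup C_{n-1})$, where $C_k$ denotes the unit cube centred at $(-\tfrac n2+k)(1,\dots,1)$; this hull contains every $C_k$. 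It remains to pass from this convex hull to the string of cubes at the level of lattice points, and this is elementary: on $\mathrm{conv}(C_1\cup C_{n-1})$ every functional $a_i-a_j$ takes values in $[-1,1]$, so every lattice point of the polytope has all of its coordinates within a window of length $1$, and together with the coordinate bounds this forces it into one of $C_1,\dots,C_{n-1}$. Hence the Alexander support lies inside the string of cubes, and, since its convex hull is the whole polytope, it meets the two extreme cubes $C_1$ and $C_{n-1}$.

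The reverse inclusion --- that \emph{every} lattice point of the string of cubes supports homology --- is where the real work lies. Since $T_n$ is a positive (indeed algebraic) torus link, namely the link of the plane curve singularity $x^n=y^n$, it is an L-space link, so by the structure theorem for L-space links $\widehat{HFL}(S^3,T_n)$ is determined by the $H$-function of $T_n$, itself computable from the Seifert structure or from $\Delta_{T_n}(t_1,\dots,t_n)$. The main obstacle is to carry this through: the graded Euler characteristic $\chi(\widehat{HFL}(T_n))\doteq\prod_i(t_i^{1/2}-t_i^{-1/2})\,\Delta_{T_n}$ already detects every lattice point of the string of cubes except possibly the $n-2$ ``junction'' corners $(-\tfrac n2+k+\tfrac12)(1,\dots,1)$ that are shared by consecutive cubes, and at those corners the two contributions to the Euler characteristic can cancel (they always do when $n$ is odd), so one must compute the homology there by other means --- from the $H$-function, or by inducting on $n$ via the exact sequence that relates $\widehat{HFL}(T_n)$ to $\widehat{HFK}(T_{n-1})$ upon forgetting a Hopf fibre, or, as in the body of this paper, as a corollary of the explicit rank computation in those gradings.
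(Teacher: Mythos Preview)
Your approach is genuinely different from the paper's and, for the outer bound, both correct and conceptually cleaner. The paper never invokes the Thurston norm or the L--space property; instead it works entirely with one explicit spherical Heegaard diagram for $T_n$. The outer bound there comes from enumerating the pure intersection points, computing their Alexander contributions, and then combining the $S_n$--symmetry of $\widehat{HFL}(T_n)$ with the observation that, in each $S_n$--orbit of lattice points whose coordinates differ by at least two, the nondecreasing representative supports \emph{no generators at all} (Lemma~\ref{lem:slice}). Your route via $M\cong S^1\times\Sigma_{0,n}$, Thurston's computation of $x$, and the Ozsv\'ath--Szab\'o polytope theorem recovers exactly the same convex body and then reduces to the string of cubes by the coordinate--difference bound; this argument would transplant to other Seifert links with little change, which the diagrammatic argument would not.

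For the inner bound, however, your sketch stops short of a proof. The paper obtains nonvanishing at every interior slice $v_{s,c,n}$ (for $2\le s\le n$) not from the Euler characteristic or the $H$--function, but by writing down all generators at the nondecreasing representative, observing they lie in two adjacent homological degrees, and proving that the differential is injective on the higher--degree piece (Theorem~\ref{thm:vhom}); this yields the explicit rank $\binom{n-2}{c-1}>0$. Your proposed routes---the $H$--function for an L--space link, or the spectral sequence obtained by forgetting a component---are viable in principle, but you have not executed either, and as you correctly note the Euler--characteristic argument alone genuinely fails at the shared corners $v_{C,n}$ of adjacent cubes (the two contributions cancel when $n$ is odd). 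So the outer half of your proposal is a legitimate alternative proof; the inner half is a plan rather than a proof, and to complete it you would need to carry out one of the computations you list.
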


More descriptively, this is a string of hypercubes stretched along the main diagonal of $\mathbb{R}^n$.  Starting with the hypercube with the highest Alexander gradings, we label the cubes from $1$ to $n-1$.

\begin{figure}[h!]
\begin{center}
\scalebox{.6}{\includegraphics{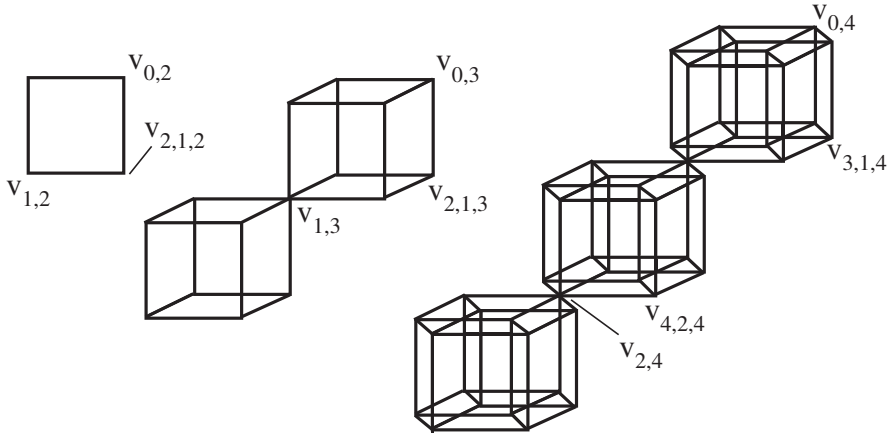}}
\end{center}
\label{HDEx}
\caption{The Alexander support for $\widehat{HFL}(T_2)$, $\widehat{HFL}(T_3)$, and $\widehat{HFL}(T_4)$.  Selected vertices are labeled to illustrate the conventions described immediately before Theorem~\ref{thm:vhom}.}\label {fig:sup}
\end{figure}

Each lattice point on a fixed hyperplane orthogonal to the main diagonal supports identical Heegaard Floer groups.  We label the slices in each hypercube from $1$ to $n+1$, starting with the slice with the highest Alexander gradings.  Notice that the highest-numbered slice of one hypercube is a single lattice point which coincides with the lowest-ordered slice of the next hypercube.  (For example, consider the vertices labeled $v_{1,3}$ and $v_{2,4}$ in  Figure~\ref{fig:sup}.) We introduce separate notation for the extremal and interior slices of the hypercubes.  

Define  $v_{s, c, n}$ to be a lattice point in the $s^{th}$ hyperplane slice of the $c^{th}$ hypercube for $T_n$.  Define $v_{C,n}$ to be the lattice point where the $C^{th}$ hypercube meets the $(C+1)^{th}$ hypercube.

\begin{theorem}\label{thm:vhom}
For $2 \leq s \leq n$, $\widehat{HFL}(T_n, v_{s,c,n};\mathbb{Z}_2)$ has rank $\bin{n-2}{c-1}$ and is supported in homological degree $-c^2-s+2$. Furthermore, $\widehat{HFL}(T_n, v_{0,n};\mathbb{Z}_2)$ has rank $1$ and is supported in degree $0$ and $\widehat{HFL}(T_n, v_{n-1, n};\mathbb{Z}_2)$ has rank $1$ and is supported in degree $n-n^2$.   
\end{theorem}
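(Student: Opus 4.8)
The plan is to compute $\widehat{HFL}(T_n;\Z_2)$ from an explicit multi-pointed Heegaard diagram for $T_n$ (for instance a grid diagram), and to show that in the Alexander multigradings of interest the link Floer differential is forced to vanish for grading reasons, so that $\widehat{HFL}(T_n,\mathbf s;\Z_2)$ is just the free $\Z_2$-module on the generators whose Alexander multigrading is $\mathbf s$. Here I use that the internal differential of $\widehat{HFL}(T_n,\mathbf s;\Z_2)$ preserves $\mathbf s$ and drops the Maslov grading by $1$, so it suffices to see that the generators in a fixed multigrading all sit in a single Maslov degree.

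\textbf{Set-up and reductions.} First I would record the structure of $T_n$: it is the closure of $(\sigma_1\cdots\sigma_{n-1})^n$, equivalently $n$ pairwise Hopf-linked unknots; it is fibered (being the link of $x^n=y^n$); and it carries an $S_n$ symmetry permuting its components, under which $\widehat{HFL}$ is equivariant. Since $S_n$ acts transitively on the lattice points of each diagonal slice $\{\sum s_i=\text{const}\}$ of the unit cubes provided by Theorem~\ref{thm:alexsup}, the group $\widehat{HFL}(T_n,\mathbf s;\Z_2)$ depends only on the slice, and it suffices to treat one representative $v_{s,c,n}$ per slice. I would normalize the absolute homological grading so that the unique generator in the top Alexander grading lies in degree $0$ (this is the content of the $v_{0,n}$ statement, together with the fact that fiberedness makes the extremal group rank one), and I would record the conjugation symmetry $\widehat{HFL}_d(T_n,\mathbf s;\Z_2)\cong\widehat{HFL}_{d-2\sum s_i}(T_n,-\mathbf s;\Z_2)$.

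\textbf{Interior slices.} Fix $c$ and $s$ with $2\le s\le n$. Using the combinatorial formulas for the Alexander and Maslov gradings on the chosen diagram, I would (i) enumerate the generators lying in the multigrading $v_{s,c,n}$, (ii) check that all of them have Maslov grading $-c^2-s+2$, and hence (iii) conclude that the differential vanishes on this subquotient complex and $\widehat{HFL}(T_n,v_{s,c,n};\Z_2)\cong\Z_2^{N}$, where $N$ is the number of such generators. I expect the generators in the $c$-th cube to organize into $\bin{n-2}{c-1}$ families, one generator in each interior slice $s=2,\dots,n$ (the Maslov grading stepping down by one as $s$ increases, matching $-c^2-s+2$), so that step (iii) reduces to a bijection between these families and the $(c-1)$-element subsets of an $(n-2)$-element set, i.e.\ to the count $N=\bin{n-2}{c-1}$.

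\textbf{Extremal slices and the main obstacle.} For $v_{0,n}$ there is a single generator, consistent with the rank-one extremal group of a fibered link, and it lies in degree $0$ by our normalization. Applying the conjugation symmetry to the bottom lattice point $v_{n-1,n}=-v_{0,n}$, whose coordinate sum is $-\tfrac{n(n-1)}{2}$, gives rank one in degree $0-2\cdot\tfrac{n(n-1)}{2}=n-n^2$, as claimed. The hard part will be the interior computation: choosing a diagram in which the generators of a fixed slice are genuinely concentrated in one Maslov degree, and correctly reading off that degree as $-c^2-s+2$ and the count as $\bin{n-2}{c-1}$. An alternative route is an induction on $n$ via the oriented skein exact sequence relating $T_n$ to $T_{n-1}$ and simpler links; there the difficulty migrates to ruling out the a priori nonzero connecting maps, which one would handle by combining Theorem~\ref{thm:alexsup} with an Euler-characteristic count from the multivariable Alexander polynomial of $T_n$ to force the relevant portions of the long exact sequence to split.
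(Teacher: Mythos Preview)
Your treatment of the extremal vertices $v_{0,n}$ and $v_{n-1,n}$ via fiberedness and the conjugation symmetry is fine. The gap is in the interior slices.

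Your step (ii)---that all generators in Alexander grading $v_{s,c,n}$ share a single Maslov degree, so that the differential vanishes for grading reasons---is false, and this is precisely where the content of the theorem lies. In the Heegaard diagram the paper actually uses (and a grid diagram will certainly not behave better), the generators at $v_{s,c,n}$ split into three families $\mathcal{E}_{s,c,n}$, $\mathcal{I}_{s,c,n}$, $\mathcal{G}_{s,c,n}$ of cardinalities $\bin{n-2}{c-1}$, $\bin{n-2}{c-2}$, $\bin{n-2}{c-2}$ respectively, occupying \emph{two} consecutive Maslov degrees: the $\mathcal{E}$- and $\mathcal{I}$-generators sit in degree $-c^2-s+2$, while the $\mathcal{G}$-generators sit one higher, in degree $-c^2-s+3$. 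So your generator count $\bin{n-2}{c-1}$ is already too low by $2\bin{n-2}{c-2}$, and the differential on this subquotient is not zero.

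What the paper actually proves is that the differential restricted to the span of $\mathcal{G}_{s,c,n}$ is \emph{injective}. This kills all of $\mathcal{G}$ and exactly $\bin{n-2}{c-2}$ dimensions of $\mathcal{E}\oplus\mathcal{I}$, leaving homology of rank $\bin{n-2}{c-1}$ concentrated in degree $-c^2-s+2$. Injectivity is shown by ordering $\mathcal{G}$ and $\mathcal{I}$ compatibly so that (a) each $\mathbf{x}\in\mathcal{G}$ is connected by an obvious basepoint-free bigon to the corresponding $\mathbf{y}\in\mathcal{I}$, giving $1$'s on the diagonal, and (b) the off-diagonal entries in the wrong direction vanish---this last point is an honest holomorphic-disc argument, carried out by comparing the multiplicity of a candidate domain at $w_1$ against its multiplicity at a carefully chosen auxiliary point and concluding that no positive domain exists. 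None of this is visible from Maslov bookkeeping or Euler characteristics alone, so your proposed shortcut does not go through; the skein alternative you sketch would face the same obstacle, since forcing the long exact sequence to split from Alexander-support and Euler-characteristic data cannot distinguish the rank-$\bin{n-2}{c-1}$ answer from one inflated by a cancelling pair in adjacent degrees.
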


The proof of this theorem relies on straightforward calculations using a particularly tractable Heegaard diagram for the link. It doesn't address the behavior of the homology groups at the lattice points where two hypercubes meet, however, and we can unfortunately offer only a conjectural answer to this question.  

Let $r{\F}_g$ denote a rank $r$ vector space with homological grading $g$. 

\begin{conj}\label{conj:hard}
 For $1\leq C \le \frac{n-1}{2}$,
\[
\widehat{HFL}(T_n, v_{C,n};\mathbb{Z}_2)\cong
      \bigoplus_{i=0}^C {\bin{n-1}{i}} \F_{-C^2-C-i}\oplus \bigoplus_{j=0}^{C-1} {\bin{n-1}{j}}\F_{-C^2-C-n+2+j} .
\]
\end{conj}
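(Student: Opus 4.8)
The plan is to write down the chain complex $\widehat{CFL}(T_n, v_{C,n};\Z_2)$ explicitly from the same tractable Heegaard diagram that proves Theorem~\ref{thm:vhom}, and then to compute the rank of its homology. What separates $v_{C,n}$ from the interior slices is precisely that the differential cannot vanish there for grading reasons: at the slices $2\le s\le n$ the generators sit in a single Maslov degree, so $\partial=0$, whereas at the junction lattice point the generators spread over several consecutive Maslov degrees, and $\partial$ genuinely acts. So the first step is to enumerate the generators at $v_{C,n}$ with their Maslov gradings and confirm that the degrees $-C^2-C-i$ ($0\le i\le C$) and $-C^2-C-n+2+j$ ($0\le j\le C-1$) of the conjectured answer all occur, together with further generators in adjacent degrees that are meant to cancel. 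Since $v_{C,n}$ is simultaneously the top slice of hypercube $C$ and the bottom slice of hypercube $C+1$, and $\bin{n-1}{i}=\bin{n-2}{i}+\bin{n-2}{i-1}$, one expects the two summands of the conjecture to repackage the slice-$n$ data of hypercube $C$ and the slice-$2$ data of hypercube $C+1$ (of ranks $\bin{n-2}{C-1}$ and $\bin{n-2}{C}$ by Theorem~\ref{thm:vhom}) after this cancellation.

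Before confronting $\partial$ I would gather every cheap constraint. The conjugation symmetry $\widehat{HFL}(L,\mathfrak{s})\cong\widehat{HFL}(L,-\mathfrak{s})$, up to a Maslov shift, interchanges $v_{C,n}$ and $v_{n-1-C,n}$ (the Alexander support of $T_n$ being centered at the origin), which is exactly why it suffices to treat $1\le C\le\frac{n-1}{2}$. Next, the graded Euler characteristic of $\widehat{HFL}(T_n,\mathfrak{s})$ is the corresponding coefficient of $\prod_i(t_i^{1/2}-t_i^{-1/2})\,\Delta_{T(n,n)}$, and the multivariable Alexander polynomial of a torus link is explicitly known; at $v_{C,n}$ it predicts the signed count $\sum_{i=0}^C(-1)^i\bin{n-1}{i}+(-1)^n\sum_{j=0}^{C-1}(-1)^j\bin{n-1}{j}$, which is both a consistency check and a constraint pinning the homology down to cancelling pairs of generators in consecutive Maslov degrees. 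Finally, the basepoint maps shifting a single Alexander coordinate by one are chain maps on $\widehat{CFL}$, so the vanishing of $\partial$ at the interior slices neighboring $v_{C,n}$ (Theorem~\ref{thm:vhom}) should constrain $\partial$ there as well.

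The main obstacle is that these constraints do not close the gap: the Euler characteristic, the conjugation symmetry and the basepoint maps determine the homology only up to the possible appearance of cancelling pairs in adjacent Maslov degrees, and ruling them out requires actually evaluating $\partial$, i.e. counting the holomorphic disks in the diagram of Theorem~\ref{thm:vhom} at the one Alexander grading where $\partial$ is nonzero --- precisely the computation that the proof of Theorem~\ref{thm:vhom} sidesteps. The route I would try is induction on $n$: changing and oriented-resolving a crossing of $T_n$ between two of its components, together with deleting a component (which turns $T_n$ into $T_{n-1}$), yields skein- and sublink-type exact sequences relating $\widehat{HFL}(T_n)$ to $\widehat{HFL}(T_{n-1})$ and simpler pieces, with small $n$ and the boundary case $C=1$ as base cases. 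Making this work requires tracking the Alexander gradings and Maslov shifts in those sequences precisely enough to isolate $v_{C,n}$ and to check that the relevant connecting homomorphism carries no unexpected rank; this control of the connecting map at the junction point --- equivalently, the direct disk count at $v_{C,n}$ --- is the crux, and is the reason the statement remains a conjecture.
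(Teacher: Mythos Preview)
First, note that the statement is a \emph{conjecture}: the paper does not prove it. What the paper proves is Proposition~\ref{prop:v1} (the case $C=1$ for $n\ge 3$) and Theorem~\ref{thm:hard} (an inductive step: for sufficiently large $n$, if the conjecture holds at $v_{C,n-1}$ and $v_{C-1,n-1}$ then it holds at $v_{C,n}$), together with direct verification for $n\le 6$. Your closing paragraph correctly flags that the statement remains open and names the obstruction, so in that sense your proposal is honest about its status.

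Your inductive approach via deleting a component (sending $T_n$ to $T_{n-1}$) is exactly the mechanism the paper uses in Section~\ref{sect:vhard}. But there is one idea in the paper's argument that your plan is missing: the paper runs \emph{two} spectral sequences in tandem, one projecting over $v_{C,n}$ and one over its conjugate $v_{n-1-C,n}$, and it is the interplay that closes the induction. The sequence at $v_{n-1-C,n}$, translated back via Equation~\ref{eq:grshift}, produces an \emph{upper} bound for $\widehat{HFL}(T_n,v_{C,n})$; the sequence at $v_{C,n}$ produces a \emph{lower} bound; and for sufficiently large $n$ the two coincide. You invoke conjugation symmetry only to halve the range of $C$, not as a second active constraint --- and a single spectral sequence by itself leaves exactly the ambiguity (extra cancelling pairs) you describe. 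Even with this trick the paper's inductive step only works for $n$ large relative to $C$, so the conjecture is still not settled in full.

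One factual slip in your setup: you assert that at interior slices $2\le s\le n$ the generators sit in a single Maslov degree, so $\partial=0$. Lemma~\ref{lem:relgr} shows they occupy \emph{two} consecutive degrees, and the proof of Theorem~\ref{thm:vhom} works by showing $\partial$ is injective on the higher-degree piece $\mathcal{G}_{s,c,n}$; the rank $\binom{n-2}{c-1}$ comes from a cancellation count, not from $\partial$ vanishing. Your additional constraints (the multivariable Alexander polynomial as Euler characteristic, basepoint maps, skein sequences) are sensible but are not used in the paper's partial proofs; the generalized skein idea appears in the introduction only as a speculative question.
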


Assuming the conjecture,  symmetry of the link invariant implies the following:

\begin{corollary} 
For $\frac{n-1}{2}< C<n-1$
\[
\widehat{HFL}(T_n, v_{C,n})\cong \widehat{HFL}(T_n, v_{n-1-C,n})[ n^2-n-2Cn] 
\]  where $[\cdot]$ denotes a shift in the grading by the indicated amount.
\end{corollary}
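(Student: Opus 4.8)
The plan is to obtain the corollary as a formal consequence of the conjugation symmetry of link Floer homology combined with the explicit location of the vertices $v_{C,n}$ furnished by Theorem~\ref{thm:alexsup}; in fact the displayed isomorphism follows from symmetry alone, and Conjecture~\ref{conj:hard} is invoked only afterwards, to make the right-hand side explicit. Recall that an oriented $\ell$-component link $\vec L\subset S^3$ satisfies the conjugation isomorphism
\[
\widehat{HFL}_d\bigl(S^3,\vec L,(h_1,\dots,h_\ell)\bigr)\;\cong\;\widehat{HFL}_{d-2(h_1+\cdots+h_\ell)}\bigl(S^3,\vec L,(-h_1,\dots,-h_\ell)\bigr),
\]
valid over $\mathbb{Z}_2$; in the grading-shift notation of the corollary this reads $\widehat{HFL}(S^3,\vec L,\vec h)\cong\widehat{HFL}(S^3,\vec L,-\vec h)\,[\,2(h_1+\cdots+h_\ell)\,]$.

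The first step is to pin down the Alexander multigrading of $v_{C,n}$. By Theorem~\ref{thm:alexsup} and the ordering convention above, the $c$-th hypercube is the unit cube centered at the diagonal point all of whose coordinates equal $\tfrac{n}{2}-c$, so the $C$-th and $(C+1)$-st cubes meet precisely at the corner all of whose coordinates equal $\tfrac{n}{2}-C-\tfrac{1}{2}=\tfrac{n-2C-1}{2}$; this corner is $v_{C,n}$. Consequently $-v_{C,n}$ is the diagonal lattice point whose coordinates all equal $\tfrac{2C+1-n}{2}=\tfrac{n}{2}-(n-1-C)-\tfrac{1}{2}$, which is exactly the corner shared by the $(n-1-C)$-st and $(n-C)$-th cubes; that is, $-v_{C,n}=v_{n-1-C,\,n}$. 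The hypothesis $\tfrac{n-1}{2}<C<n-1$ forces $1\le n-1-C<\tfrac{n-1}{2}$, so $v_{n-1-C,n}$ is indeed a meeting vertex and, moreover, its index falls in the range $1\le C'\le\tfrac{n-1}{2}$ governed by Conjecture~\ref{conj:hard}.

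It then remains to evaluate the grading shift. The coordinate sum of $v_{C,n}$ is $n\cdot\tfrac{n-2C-1}{2}$, so $2\sum_i (v_{C,n})_i=n(n-2C-1)=n^2-n-2Cn$. Substituting $\vec h=v_{C,n}$ into the conjugation isomorphism and using $-v_{C,n}=v_{n-1-C,n}$ yields
\[
\widehat{HFL}(T_n,v_{C,n})\;\cong\;\widehat{HFL}(T_n,v_{n-1-C,n})\,[\,n^2-n-2Cn\,],
\]
which is the assertion of the corollary; substituting the description of $\widehat{HFL}(T_n,v_{n-1-C,n})$ from Conjecture~\ref{conj:hard} then gives a closed formula for the left-hand side. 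I do not expect a substantive obstacle here: the only delicate point is the consistent bookkeeping of conventions --- the direction of the conjugation isomorphism, the meaning of the shift $[\,\cdot\,]$, and the half-integrality of the Alexander gradings of $T_n$ when $n$ is even --- which must all be fixed compatibly so that the shift comes out as $n^2-n-2Cn$ rather than its negative.
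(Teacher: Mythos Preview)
Your argument is correct and is exactly the approach the paper intends: the corollary is a direct application of the conjugation symmetry (Equation~\ref{eq:grshift}) together with the identification $-v_{C,n}=v_{n-1-C,n}$ coming from the location of the hypercube vertices. Your observation that the displayed isomorphism itself holds unconditionally---with Conjecture~\ref{conj:hard} needed only to make the right-hand side explicit---is a nice clarification of the paper's phrasing.
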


Conjecture~\ref{conj:hard} has been verified by direct computation for $n\leq 6$.   More persuasive support comes from the following theorem, which demonstrates that for all $C$, the behavior at the extremal vertex of the $C^{th}$ hypercube stabilizes for large values of $n$:

\begin{theorem}\label{thm:hard}
For sufficiently large $n$, if Conjecture~\ref{conj:hard} holds at $v_{C, n-1}$ and $v_{C-1, n-1}$, then it holds for $v_{C,n}$, as well.  
\end{theorem}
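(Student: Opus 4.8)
The plan is to induct on $n$ using the relationship between $\widehat{HFL}(T_n)$ and $\widehat{HFL}(T_{n-1})$ that comes from deleting a component. Write $T_n=T_{n-1}\cup K$, where $K$ is one of the $n$ parallel components, so that $\mathrm{lk}(K,L_i)=1$ for every component $L_i$ of $T_{n-1}$. The tool is the Ozsv\'ath--Szab\'o ``one fewer component'' spectral sequence for $K$, run on the tractable Heegaard diagram already used in the proof of Theorem~\ref{thm:vhom}: no longer requiring holomorphic disks to miss the $z$-basepoint of $K$ yields a complex filtered by the $K$-Alexander grading, whose associated graded is $\widehat{HFL}(T_n)$ and whose total homology is $\widehat{HFL}(T_{n-1})$ tensored with a two-dimensional space, up to an overall grading shift determined by $n$ and the linking data. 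Since all the linking numbers equal $1$, the spectral sequence differentials move the $n-1$ surviving Alexander coordinates together, so the part of $\widehat{HFL}(T_{n-1})$ responsible for the diagonal junction vertex $v_{C,n}$ is distributed over the two consecutive diagonal junction vertices $v_{C,n-1}$ and $v_{C-1,n-1}$ of $T_{n-1}$ --- which is exactly why the hypothesis asks for the Conjecture at both.

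To make this usable I would localize near $v_{C,n}$. Every coordinate of $v_{C,n}$ equals $a=\tfrac{n}{2}-C-\tfrac12$, and a short computation from Theorem~\ref{thm:alexsup} shows that the lattice points of the Alexander support whose first $n-1$ coordinates all equal $a$ are precisely the three obtained by letting the $K$-coordinate run over $\{a-1,a,a+1\}$: the two outer points lie in interior slices of hypercubes $C$ and $C+1$ (slice $n$ of the former, slice $2$ of the latter), so their homology is given exactly by Theorem~\ref{thm:vhom}, while the middle point is $\widehat{HFL}(T_n,v_{C,n})$ itself. Because the differentials preserve $A_1,\dots,A_{n-1}$, these three points cut out a direct summand of the filtered complex, and the resulting spectral sequence expresses $\widehat{HFL}(T_n,v_{C,n})$ in terms of the two known interior slices and of $\widehat{HFL}(T_{n-1})$ at the nearby junction vertices. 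A reassuring consistency check is that the ranks fit together exactly: combining the binomial coefficients predicted by the Conjecture at $v_{C,n-1}$ and $v_{C-1,n-1}$ with those of Theorem~\ref{thm:vhom} at the two interior slices, and using the Pascal identity $\binom{n-1}{i}=\binom{n-2}{i}+\binom{n-2}{i-1}$, reproduces precisely the ranks the Conjecture predicts at $v_{C,n}$.

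The heart of the argument is to show the spectral sequence degenerates in this range, so that the pieces split off and $\widehat{HFL}(T_n,v_{C,n})$ can genuinely be solved for. This is not forced by the rank count: there are candidate differentials --- for instance out of an interior slice into the low-Maslov part of $\widehat{HFL}(T_n,v_{C,n})$, or connecting maps involving the two-dimensional tensor factor --- that are compatible with both the gradings and the ranks when $n$ is small. The role of the hypothesis ``for sufficiently large $n$'' is to push the relevant homological degrees --- $-C^2-n+2$ and $-(C+1)^2$ for the flanking slices, the degrees $-C^2-C-i$ and $-C^2-C-n+2+j$ predicted at $v_{C,n}$, and the degrees inherited from $v_{C,n-1}$ and $v_{C-1,n-1}$ --- far enough apart that these arrows have nowhere to land. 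Once that is established the graded pieces determine $\widehat{HFL}(T_n,v_{C,n})$, one checks the answer is the expression in Conjecture~\ref{conj:hard}, and the symmetry $\widehat{HFL}(T_n,\vec a)\cong\widehat{HFL}(T_n,-\vec a)$ up to a grading shift together with the Euler characteristic computed from the multivariable Alexander polynomial of $T_n$ serve as independent checks. I expect this degeneration step to be the main obstacle: ruling out the candidate differentials requires a complete accounting of the Maslov gradings on every page of the spectral sequence, and locating the threshold on $n$ is delicate because $C$ can itself be as large as $\tfrac{n-1}{2}$. A secondary but unavoidable hurdle is nailing down the overall grading shift in the sublink spectral sequence --- the precise Maslov degrees of the two-dimensional tensor factor and the exact shift of the Alexander coordinates --- since an error there would corrupt the final identification even if the degeneration argument is sound.
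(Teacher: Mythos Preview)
Your setup is right: the sublink spectral sequence, the identification of the three lattice points in the fiber (the interior slices $v_{n,C,n}$ and $v_{2,C+1,n}$ flanking the unknown junction $v_{C,n}$), and the role of Pascal's identity are all exactly as in the paper.  But two things go wrong.

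First, a factual slip: the fiber over $v_{C,n}$ under the forgetful map lands at the \emph{single} junction vertex $v_{C,n-1}$ of $T_{n-1}$, not at two junction vertices.  Your explanation for why the hypothesis demands both $v_{C,n-1}$ and $v_{C-1,n-1}$ is therefore incorrect; this is not why the second hypothesis is needed.

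Second, and more seriously, you have correctly located the obstacle---proving degeneration of the spectral sequence---but you have no mechanism to overcome it, and ``separate the gradings for $n$ large'' alone does not do it: with only one spectral sequence, the middle term is only determined up to the ambiguity of how much of the two flanking slices cancels into it.  The paper's missing ingredient is that you should run a \emph{second} spectral sequence at the symmetric vertex $v_{n-1-C,n}$.  That sequence projects to $v_{n-1-C,n-1}$, whose homology is determined (via the symmetry of Equation~\eqref{eq:grshift}) by the hypothesis at $v_{C-1,n-1}$---this is the real reason the second hypothesis appears.  The second spectral sequence gives a constraint on $\widehat{HFL}(T_n,v_{n-1-C,n})$, which, transported back through Equation~\eqref{eq:grshift}, becomes an \emph{upper} bound on $\widehat{HFL}(T_n,v_{C,n})$.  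The first spectral sequence (the one you described) yields a \emph{lower} bound.  For $n$ large enough that the relevant Maslov degrees do not coincidentally coincide, the two bounds agree with the conjectured expression, and you are done---no direct degeneration argument is needed.  You mention the symmetry only as an after-the-fact check; in the paper it is the engine of the proof.
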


Where the proof of Theorem~\ref{thm:vhom} relied on the symmetry within a single $T_n$ link, the proof of Theorem~\ref{thm:hard} depends on the relationship between torus links for different values of $n$.  

This approach yields analogous results for the link homology of $T(n, sn)$; as in the $s=1$ case, the Alexander gradings supporting non-trivial homology form a string of  $s(n-1)$ hypercubes in ${\Z}^{n}$.  Determining ranks and gradings at hyperplane slices in the interior of each cube is similarly straightforward, but the homology at lattice points where two hypercubes join is again more complicated.  

We end this section with some speculation.  

\subsubsection{Representations of the symmetric group}

For any set of components of $T_n$, one may find an isotopy which permutes the components while fixing the link setwise.  We conjecture that this induces an action of the symmetric group $S_n$ on the Heegaard Floer link homology in the following sense.

Fix a Heegaard diagram $\mathcal{D}_1=(\Sigma, \mathbf{\alpha}, \mathbf{\beta}, z_1, \dots, z_n, w_1, \dots, w_n)$ for $T_n\subset S^3$ and a permutation $\sigma\in S_n$.  Let $\mathcal{D}_2$ denote the Heegaard diagram $(\Sigma, \mathbf{\alpha}, \mathbf{\beta}, z_{\sigma(1)}, \dots, z_{\sigma(n)}, w_{\sigma(1)}, \dots, w_{\sigma(n)})$.  To the diagram $\mathcal{D}_i$ one associates the chain complex $\widehat{CFL}(\mathcal{D}_i)$ and its homology $\widehat{HFL}(\mathcal{D}_i)$;  this notation is chosen to emphasize the specific graded vector space, rather than simply its isomorphism class.   Since there is an isotopy of $T_n$ which sends the components labeled $(1, \dots, n)$ to the components labeled $\big(\sigma(1), \dots, \sigma(n)\big)$, it follows that there exist diagram moves (i.e., isotopies, handleslides, stabilizations) which transform $\mathcal{D}_1$ to $\mathcal{D}_2$.  Fix such a sequence of diagram moves and consider the induced isomorphism of graded vector spaces  $F_\sigma: \widehat{HFL}(\mathcal{D}_1)\rightarrow \widehat{HFL}(\mathcal{D}_2)$.  

On the other hand, there is a canonical isomorphism of the chain complexes $\widehat{CFL}(\mathcal{D}_1)\cong\widehat{CFL}(\mathcal{D}_2)$ which comes from identifying intersection points on the two diagrams.  Denote the induced isomorphism on homology by $G_\sigma: \widehat{HFL}(\mathcal{D}_2)\rightarrow\widehat{HFL}(\mathcal{D}_1)$. Although this isomorphism respects homological degree, it permutes the coordinates of the Alexander grading.  

\begin{conj} The map which sends $\sigma$ to $G_\sigma \circ F_\sigma \in \text{Aut}\big(\widehat{HFL}(\mathcal{D}_1)\big)$ defines a representation of $S_n$ on $\widehat{HFL}(\mathcal{D}_1)$. 
\end{conj}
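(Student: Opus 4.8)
The plan is to reduce the conjecture to the naturality and functoriality package for link Heegaard Floer homology together with a single statement about the mapping class group of $(S^3, T_n)$. First I would make $\rho(\sigma)$ precise: the map $F_\sigma$ is only defined once a sequence of diagram moves is chosen, and even the existence of a well-defined induced map rests on the naturality results for (link) Heegaard Floer homology due to Juh\'asz--Thurston--Zemke and to Zemke, which postdate the thesis. Granting these, the composite of the transition maps along any sequence of moves realizing a fixed ambient diffeomorphism $\Phi_\sigma$ of $(S^3, T_n)$ depends only on the isotopy class of $\Phi_\sigma$, and $\Phi \mapsto \Phi_*$ is functorial with $(\mathrm{id})_* = \mathrm{id}$; hence $F_\sigma = (\Phi_\sigma)_*$ for any ambient isotopy $\Phi_\sigma$ permuting the components of $T_n$ by $\sigma$, and $\rho(\sigma) := G_\sigma \circ F_\sigma$ is an automorphism of $\widehat{HFL}(\mathcal{D}_1)$ preserving the homological grading and permuting the Alexander multigrading according to $\sigma$. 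One caveat persists: the isotopy class of $\Phi_\sigma$ is itself only well defined up to the pure mapping class group $\mathrm{PMCG}(S^3, T_n)$ of classes fixing each component of $T_n$ setwise, so the independence of $\rho(\sigma)$ from the choices left open in the conjecture is not yet established.

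Next I would reduce the homomorphism property $\rho(\sigma\tau) = \rho(\sigma) \circ \rho(\tau)$. The isomorphisms $G_\sigma$ come from the identity on the set of generators $\mathbb{T}_\alpha \cap \mathbb{T}_\beta$ together with the observation that $\widehat{CFL}$ records only the underlying \emph{set} of basepoints; consequently the basepoint relabelings assemble into a strict $S_n$-action on the underlying complex of $\mathcal{D}_1$, and under these identifications the map induced by $\Phi_\sigma$ is unchanged whether it is applied to $\mathcal{D}_1$ or to a relabeling of it. Feeding this into $G_\sigma \circ F_\sigma \circ G_\tau \circ F_\tau$ and using functoriality, $\rho(\sigma) \circ \rho(\tau)$ is identified with the map induced by $\Phi_\sigma \circ \Phi_\tau$, while $\rho(\sigma\tau)$ is the map induced by $\Phi_{\sigma\tau}$. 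Since $\Phi_\sigma \circ \Phi_\tau$ and $\Phi_{\sigma\tau}$ are ambient diffeomorphisms of $(S^3, T_n)$ inducing the \emph{same} permutation $\sigma\tau$ of the components, they differ by an element of $\mathrm{PMCG}(S^3, T_n)$. Thus both the residual well-definedness issue above and the homomorphism property follow from one assertion: \emph{$\mathrm{PMCG}(S^3, T_n)$ acts trivially on $\widehat{HFL}(T_n; \mathbb{Z}_2)$} --- equivalently, the $\pi_0 \mathrm{Diff}^+(S^3, T_n)$-action factors through the homomorphism recording the permutation of components.

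The hard part will be proving that triviality. Realizing $T_n$ as $n$ fibers of the Hopf fibration $S^3 \to S^2$, $\mathrm{PMCG}(S^3, T_n)$ is generated by Dehn twists along tori lying over simple closed curves in $S^2$ and by point-pushing maps that drag a single fiber around a loop in $S^2$ missing the other $n-1$ base points; the delicate points are the lattice points $v_{C,n}$, whose Alexander multigrading lies on the main diagonal and is therefore $S_n$-invariant, so that there $\rho$ must restrict to an honest representation of $S_n$ rather than a permutation of distinct graded summands --- precisely the summands that Conjecture~\ref{conj:hard} describes, whose ranks $\binom{n-1}{i}$ are suggestive of exterior powers of the standard representation. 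I would attack this by writing a chosen point-push explicitly as a product of torus Dehn twists in $S^3$ and computing its action on a concrete model, such as the tractable Heegaard diagram behind Theorem~\ref{thm:vhom}. A cleaner route, if it can be carried out, would be to produce a single Heegaard diagram for $T_n$ carrying a genuine $S_n$-symmetry --- permuting the $\alpha$- and $\beta$-curves setwise and the basepoint pairs according to $S_n$ --- for which, in the combinatorial case where transversality is not an issue, the symmetry acts on generators and realizes the representation directly on the chain level, leaving only the task of matching it with $\sigma \mapsto G_\sigma \circ F_\sigma$; this amounts to choosing the family $\{\Phi_\sigma\}$ coherently. That Conjecture~\ref{conj:hard} holds for $n \leq 6$ and stabilizes as in Theorem~\ref{thm:hard} is evidence that the point-pushes act trivially, but I see no uniform argument, which is presumably why the statement remains a conjecture.
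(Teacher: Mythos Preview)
The statement you are addressing is a \emph{conjecture} in the paper, not a theorem: the paper offers no proof, and indeed remarks immediately afterward that ``currently it is not even clear how to choose the diagram moves so that these maps satisfy the necessary symmetric group relations.'' There is therefore no proof in the paper to compare your proposal against.

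Your reduction is a reasonable framing of the problem and goes further than the paper itself does. Granting the post-2007 naturality package, you correctly isolate the obstruction as the action of the pure mapping class group $\mathrm{PMCG}(S^3, T_n)$ on $\widehat{HFL}$: both the well-definedness of $\rho(\sigma)$ and the homomorphism relation $\rho(\sigma\tau)=\rho(\sigma)\rho(\tau)$ reduce to showing that this action is trivial. You are also candid that you do not establish this triviality, so your proposal is a strategy rather than a proof. One bookkeeping caution: the step where you slide the relabeling maps $G_\bullet$ past the diffeomorphism-induced maps $F_\bullet$ needs to be formulated carefully, since $F_\sigma$ and $G_\tau$ as defined have different sources and targets; the precise statement is a commuting square expressing that the transition map induced by $\Phi_\sigma$ intertwines the tautological identifications between differently labeled copies of the same diagram. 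With that square made explicit your reduction goes through, but the hard step---triviality of the $\mathrm{PMCG}$ action, or equivalently a coherent choice of the family $\{\Phi_\sigma\}$---remains exactly the open point the paper flags.
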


Assuming the above conjecture, it would be interesting to explicitly identify the resulting representation. Although $G_\sigma$ does not respect the Alexander grading in general, it does preserve the Alexander grading of generators at the extremal vertices of each hypercube.  Thus the homology at each such vertex would be a representation of $S_n$ in its own right.  Conjecture~\ref{conj:hard} asserts that at Alexander gradings $v_{C,n}$, the dimensions of the graded summands are binomial coefficients of the form $\bin{n-1}{k}$; when working over a field of characteristic zero, these are exactly the dimensions of the irreducible representations of $S_n$ which correspond to hook partitions of $n$. However, when working over a field of characteristic two, decomposing the representations of $S_n$ is more subtle.  More speculatively, we propose the following: 

\begin{conj} When the Heegaard Floer link homology is taken over a field of characteristic zero, each graded summand of $\widehat{HFL}(S^3, T_n, v_{C,n})$ is an irreducible representation of $S_n$.
\end{conj}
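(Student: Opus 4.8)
The plan is to promote Conjecture~\ref{conj:hard} from an identity of graded dimensions to an isomorphism of graded $S_n$-representations, by an induction on $n$ modelled on the proof of Theorem~\ref{thm:hard}. Granting the representation conjecture above, $\widehat{HFL}(S^3, T_n, v_{C,n})$ is a finite-dimensional graded module over $\mathbb{Q}[S_n]$ (I work over $\mathbb{Q}$ or $\mathbb{C}$; semisimplicity is used already in treating each homogeneous piece as a subrepresentation). Write $U$ for the $(n-1)$-dimensional standard representation of $S_n$, so that $\wedge^i U$ is the hook irreducible of dimension $\binom{n-1}{i}$, with $\wedge^0 U$ trivial and $\wedge^{n-1}U$ the sign representation. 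Assuming Conjecture~\ref{conj:hard}, the homogeneous piece of $\widehat{HFL}(S^3, T_n, v_{C,n})$ in homological degree $-C^2-C-i$ has dimension $\binom{n-1}{i}=\dim\wedge^i U$, and the piece in degree $-C^2-C-n+2+j$ has dimension $\binom{n-1}{j}$. The entire content to be proved is therefore that each such piece is \emph{isomorphic} to the corresponding $\wedge^i U$; irreducibility is then automatic.

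First I would handle a base range of $n$. Conjecture~\ref{conj:hard} is already verified for $n\le 6$, and for those $n$ one can also compute the $S_n$-action directly --- either from the tractable Heegaard diagram underlying Theorem~\ref{thm:vhom} (on which $G_\sigma$ is monomial on generators, though $F_\sigma$ is not) or by computing the character --- and check that the result is the claimed sum of hook representations. This base range should be taken large enough to also cover the finitely many $(n,C)$ for which the ``$n$ sufficiently large'' hypothesis of Theorem~\ref{thm:hard} fails, as well as the few small $(n,i)$ where the branching step below is inconclusive. For the inductive step I would show that the maps relating $\widehat{HFL}(T_n, v_{C,n})$ to $\widehat{HFL}(T_{n-1}, v_{C,n-1})$ and to $\widehat{HFL}(T_{n-1}, v_{C-1,n-1})$ appearing in the proof of Theorem~\ref{thm:hard} may be chosen equivariantly for the subgroup $S_{n-1}\subset S_n$ fixing the component adjoined in passing from $T_{n-1}$ to $T_n$. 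Combined with the grading bookkeeping already present in that proof and with the inductive hypothesis (which presents the two $T_{n-1}$ groups as sums of exterior powers of the standard representation $W$ of $S_{n-1}$), this identifies the restriction to $S_{n-1}$ of the degree-$(-C^2-C-i)$ piece of $\widehat{HFL}(T_n,v_{C,n})$ with $\wedge^i W\oplus\wedge^{i-1}W$, and similarly for the second family.

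The last ingredient is a branching lemma: if $M$ is a finite-dimensional $\mathbb{Q}[S_n]$-module with $\mathrm{Res}^{S_n}_{S_{n-1}}M\cong\wedge^i W\oplus\wedge^{i-1}W$ for $1\le i\le n-2$, then $M\cong\wedge^i U$. The proof combines the classical branching rule for hook shapes --- $(n-i,1^i)$ is the unique partition of $n$ whose removable corners produce exactly the hooks attached to $\wedge^i W$ and $\wedge^{i-1}W$ --- with the facts that the restriction of a nonzero representation is nonzero and that, outside of a few small $(n,i)$ lying in the base range, neither $\wedge^i W$ nor $\wedge^{i-1}W$ is the restriction of any $S_n$-representation: such a representation would split off an irreducible constituent restricting onto the whole hook, and the corresponding partition of $n$ would then be forced to be a rectangle, which a short check on corners excludes here. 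Thus $M$ cannot split, is therefore irreducible with the prescribed restriction, and so equals $\wedge^i U$ (the case $i=0$ being one-dimensional and pinned down by its restriction being trivial rather than sign). One applies the lemma to each homogeneous piece separately, which is legitimate because for $C<\frac{n-1}{2}$ the homological degrees in Conjecture~\ref{conj:hard} are pairwise distinct, so distinct pieces are never merged. This completes the induction and identifies $\widehat{HFL}(S^3, T_n, v_{C,n})$, up to grading shifts, with $\bigoplus_{i=0}^C\wedge^i U\oplus\bigoplus_{j=0}^{C-1}\wedge^j U$, each summand of which is an irreducible representation of $S_n$.

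The main obstacle is the equivariance assertion in the inductive step, and it is compounded by two issues. First, the $S_n$-action itself is only conjecturally well defined, since $F_\sigma$ depends a priori on the chosen sequence of diagram moves, so the argument is at best conditional. Second, and more seriously, the maps relating $T_n$ to $T_{n-1}$ in the proof of Theorem~\ref{thm:hard} change the number of components, so the two copies of the $S_{n-1}$-action being compared live on complexes associated to different links; one needs a naturality statement identifying the effect of permuting the old components before and after the $T_{n-1}$-to-$T_n$ construction, strong enough to descend to homology. Finally, it is worth recording that characteristic zero is used essentially --- complete reducibility in the homogeneous decomposition, and preservation of semisimplicity under restriction in the branching lemma --- consistent with the remark above that the characteristic-two behaviour is genuinely different, so that no characteristic-free version of the statement can hold.
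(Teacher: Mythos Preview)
The statement you are attempting to prove is a \emph{conjecture} in the paper, not a theorem; the paper offers no proof and presents it purely as speculation in the introduction. There is therefore no ``paper's own proof'' to compare against. The paper explicitly flags that even the prerequisite Conjecture~2 (that $\sigma\mapsto G_\sigma\circ F_\sigma$ defines an $S_n$-action) is open, and remarks that if these conjectures \emph{were} proven they would yield an alternative route to a characteristic-zero version of Theorem~\ref{thm:hard} --- essentially the reverse of the implication you are trying to run.

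Your branching-rule argument is a reasonable strategy, and the lemma that an $S_n$-module restricting to $\wedge^i W\oplus\wedge^{i-1}W$ must be $\wedge^i U$ is correct for generic $(n,i)$. But you have accurately identified the genuine obstructions yourself: (i) the $S_n$-action is not known to be well defined, since $F_\sigma$ depends a priori on the chosen diagram moves; (ii) the spectral sequences in the proof of Theorem~\ref{thm:hard} are not known to be $S_{n-1}$-equivariant, and establishing this would require a naturality statement for Heegaard Floer link homology under component-deletion that is not available; and (iii) your argument is conditional on Conjecture~\ref{conj:hard} as well, so even granting everything else you would be proving an implication between conjectures rather than the conjecture itself. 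What you have written is a plausible outline of how a proof \emph{might} go once the foundational issues are resolved, but it is not a proof, and the paper does not claim to have one either.
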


If proven, these conjectures offer an alternative route to proving a characteristic zero version of Theorem~\ref{thm:hard}. One might hope to interpret the isomorphisms $G_\sigma\circ F_\sigma$ in the broader context of a functorial theory for isotopies or more general cobordisms, but currently it is not even clear how to choose the diagram moves so that these maps satisfy the necessary symmetric group relations.    Nevertheless, actions of the mapping class group of a manifold on  Heegaard Floer groups have been investigated in several specific contexts.  For example, Ozsv\'ath and Stipsicz show that the subgroup of the mapping class group of $(S^3, K)$ which fixes $K$ pointwise induces an action on $HFK^-(K)$ \cite{OStip}.  Similarly, Juh\'asz proved that special cobordisms of sutured manifolds induce a functorial map on sutured Heegaard Floer groups; this specializes to a map on the link homology which is again sensitive to a collection of marked points on the link components  \cite{Juh}.  At present little is known about generalizations to the case when $L$ is merely fixed setwise.

 \subsubsection{A generalized surgery exact sequence}
The surgery exact triangle for knot Floer homology is one of the most important properties of the invariant. The vertices of the triangle correspond to the Heegaard Floer groups of three links $L_-$, $L_0$, and $L_+$ which differ by oriented skein moves at a fixed crossing.  The relationship among the link homologies is a consequence of the surgery exact triangle for manifolds, together with an identification of $\widehat{HFK}(S^2\times S^1, L')$ with $\widehat{HFK}(S^3, L_0)$; here $L'$ is the link in  $S^2\times S^1$ which results from $0$-surgery on the boundary of a disc punctured  algebraically zero times and geometrically twice by the original link $L$.  

One may also consider the exact triangle associated to surgery along the boundary of a disc punctured algebraically zero times and geometrically $2n$ times.  The homological relationship between the $L_-$, $L_+$, and $L'$ chain complexes  carries over to this context, but one can no longer identify the link $L'\subset S^2\times S^1$ with the resolved link $L_0$.  It would therefore be interesting to know whether the $\widehat{HFK}(S^2\times S^1, L')$ term could be identified with some other well-understood object which might play a role analogous to that of the resolved link in the skein exact triangle.  Such a characterization would be helpful not only for computations of the sort addressed in this paper, but also for the more general case of altering a link by introducing a full twist among a collection of parallel strands.

\section{The chain complex $\widehat{CFL}(T_n)$}

\subsection{Preliminaries}\label{sect:prelim}
We assume the reader is familiar with basic definitions from Heegaard Floer theory as found in \cite{OSz2004a} and \cite{OSz2004b}, and in this section we recall a few properties of the link invariant that will be used subsequently. We work exclusively with $\mathbb{Z}_2$ coefficients and the hat theory.

Given the $n$-component link $T_n\subset S^3$, one may construct a spherical Heegaard diagram with $n-1$ of each of the following: $\alpha$ curves, $\beta$ curves, $z$ basepoints, and $w$ basepoints.  To each domain $\phi$ connecting generators, assign the vector $\big(n_{z_1}(\phi)-n_{w_1}(\phi), \dots, n_{z_n}(\phi)-n_{w_n}(\phi)\big)$ which counts intersections with the basepoints; this defines a relative  $\mathbb{Z}^n$ filtration on the chain complex associated to the diagram, and  the link invariant $\widehat{HFL}(S^3, T_n)$ is the homology of the associated graded complex.  Such a diagram  may also be viewed as specifying the knot $K\in S^3\#^{n-1}(S^2\times S^1)$ which is formed by taking the connected sum of the link components along three-dimensional one-handles attached at $(z_i, w_j)$ pairs.  It follows that there is a spectral sequence whose $E^1$ page is $\widehat{HFL}(S^3, T_n)$ and whose $E_\infty$ page is $\widehat{HF}\big(S^3\#^{n-1}(S^2\times S^1)\big)$.  This spectral sequence has several incarnations in the current work: ignoring all the $z$ basepoints yields a Heegaard diagram for $S^3 \#^{n-1}(S^2\times S^1)$, whereas ignoring a single $z_i$ basepoint  yields a diagram for $T_{n-1}$ in $S^3\# (S^2\times S^1)$.  

By convention, the homological grading on $\widehat{HFL}(S^3,L)$ is fixed so that the top-dimensional class in the total homology of the filtered complex $\widehat{HF}\big(S^3\#^{n-1}(S^2\times S^1)\big)$ is supported in degree zero \cite{OSz2008}.  In addition, the link invariant satisfies the following relationship between the homological and Alexander gradings:

\begin{equation}\label{eq:grshift}
\widehat{HFL}_*(T_n, \textbf{v}) \cong \widehat{HFL}_{*-2\delta( \textbf{v})}(T_n, - \textbf{v}).
\end{equation}
Here $ \textbf{v}\in \mathbb{Z}^n$ denotes a fixed Alexander grading, and $\delta( \textbf{v})$ is the sum of the Alexander grading coordinates \cite{OSz2008}.

\subsection{The Heegaard diagram}

We begin by defining a special Heegaard diagram and discussing some properties of the associated graded chain complex.

Consider the spherical Heegaard diagram for $T_n$ shown in Figure~\ref{fig:tnhd}.  We say that an intersection point is \textit{pure} if it has the form $\alpha_i\cap \beta_i$; otherwise, the intersection point is \textit{mixed}. Pure intersection points live either in the \textit{interior}, \textit{exterior}, or \textit{grid} regions of the diagram.  (See Figure~\ref{fig:lab}.)

\begin{figure}[h!]
\begin{center}
\scalebox{.5}{\includegraphics{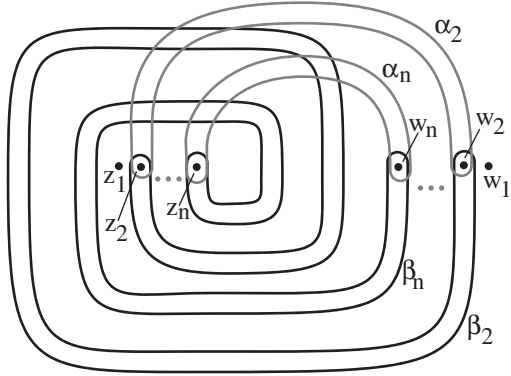}}
\end{center}
\label{HDEx}
\caption{A Heegaard diagram for $T_n$, showing two of the $n-1$ $\alpha$ and $\beta$ pairs.}\label{fig:tnhd}
\end{figure}

\begin{lemma}\label{lem:squares} Every  generator with $k$ mixed intersection points has the same Alexander grading as some  generator composed only of pure intersection points,  at least $k$ of which are in the grid region. 
\end{lemma}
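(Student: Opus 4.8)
The plan is to argue by a local "exchange" move on the Heegaard diagram of Figure~\ref{fig:tnhd}, replacing each mixed intersection point by a pure one in the grid region without changing the Alexander grading. First I would set up notation carefully: a generator $\mathbf{x}$ is an $(n-1)$-tuple of intersection points, one on each $\alpha$ curve and one on each $\beta$ curve (i.e.\ a perfect matching between the $\alpha$ and $\beta$ curves). If $\mathbf{x}$ has $k$ mixed points, the mixed points among themselves pair up some subset of the $\alpha_i$ with differently-indexed $\beta_j$; combinatorially this subset of mixed coordinates forms a union of cycles in the matching. The key observation I want to exploit is that the Alexander grading of a generator is computed, up to an overall shift, by summing local contributions, and that the structure of the diagram in Figure~\ref{fig:tnhd} is "product-like" enough that rerouting a matching cycle through the grid region changes each of the $n_{z_i}-n_{w_i}$ counts in a controlled, ultimately cancelling, way.

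The main steps, in order, are: (1) describe explicitly, for the diagram in Figure~\ref{fig:tnhd}, which pairs $(\alpha_i,\beta_j)$ actually intersect, and locate those intersection points relative to the interior/exterior/grid regions of Figure~\ref{fig:lab}; (2) given a generator $\mathbf{x}$ with a mixed cycle $(i_1\to i_2\to\cdots\to i_m\to i_1)$ — meaning $\mathbf{x}$ contains points in $\alpha_{i_1}\cap\beta_{i_2},\ \alpha_{i_2}\cap\beta_{i_3},\dots$ — exhibit a canonical domain (a sum of the elementary regions of the diagram, bounded by arcs of the relevant $\alpha$ and $\beta$ curves) connecting $\mathbf{x}$ to the generator $\mathbf{x}'$ obtained by "cutting" the cycle, i.e.\ replacing those $m$ mixed points by the pure points $\alpha_{i_1}\cap\beta_{i_1},\dots,\alpha_{i_m}\cap\beta_{i_m}$, chosen to lie in the grid region; (3) compute $\big(n_{z_1}(\phi)-n_{w_1}(\phi),\dots,n_{z_n}(\phi)-n_{w_n}(\phi)\big)$ for this connecting domain $\phi$ and check it is zero, so that $\mathbf{x}$ and $\mathbf{x}'$ lie in the same Alexander grading; (4) observe that this replacement strictly decreases the number of mixed points while the $m$ new pure points all lie in the grid region, and iterate over all mixed cycles until no mixed points remain, at which point at least the original $k$ pure points produced sit in the grid region. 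A small bookkeeping point to address is that different mixed cycles are supported on disjoint sets of coordinates, so the moves do not interfere and the counts genuinely add.

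I would handle the grading computation in step~(3) by noting that each $z_i$ and $w_i$ basepoint sits in a specific elementary region of the diagram, and that the connecting domain $\phi$, by construction, is a union of "rectangles" and "annular" pieces each of which is disjoint from all basepoints except in matched $z/w$ pairs whose contributions to $n_{z_i}-n_{w_i}$ cancel; this is exactly the feature that the basepoints in Figure~\ref{fig:tnhd} are placed to guarantee. The analogous statement for the homological grading is not claimed here and is not needed, since Lemma~\ref{lem:squares} is only about Alexander gradings.

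The hard part, and the step I expect to be the real obstacle, is step~(2): producing the connecting domain explicitly and verifying that it is an honest domain (integer multiplicities, correct boundary behaviour on every $\alpha$ and $\beta$ curve, with $\partial(\partial\phi|_\alpha)=\mathbf{x}'-\mathbf{x}$) rather than just a plausible picture. For a single transposition ($m=2$) this should be a visibly embedded region in the diagram, but for a longer mixed cycle the domain wraps through several $\alpha$/$\beta$ pairs and one must check that the pieces glue consistently and that the grid region really does contain enough room to place all $m$ replacement pure points simultaneously. I expect this to come down to a careful but ultimately routine inspection of Figure~\ref{fig:tnhd}, and the cleanest writeup is probably to first dispose of the transposition case and then induct on cycle length, reducing a long cycle to a shorter one by a single transposition-type move.
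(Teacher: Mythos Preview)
Your approach is correct and is at bottom the paper's approach: the paper also performs local exchange moves realized by Alexander-grading-preserving domains, and in fact uses precisely the transposition move $(\alpha_i\cap\beta_j),(\alpha_j\cap\beta_k)\mapsto(\alpha_j\cap\beta_j),(\alpha_i\cap\beta_k)$ that you identify in your final paragraph as the cleanest building block. The paper's writeup is much shorter because it skips the cycle scaffolding entirely and because the key geometric fact you would extract in your step~(1) is that in this diagram every mixed intersection point already lies in the grid region, and the grid region contains no basepoints whatsoever. Thus each transposition is realized by an embedded square sitting entirely inside the grid, disjoint from every $z_i$ and $w_i$; your step~(3) then becomes trivial (no cancelling $z/w$ pairs are needed) and your ``hard part''~(2) dissolves, since the square is visibly present for any single transposition and iterating transpositions handles arbitrary cycles without a separate inductive argument on cycle length.
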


\begin{proof} A generator with $k$ mixed intersection points may be transformed into a pure generator with $k$ pure intersection points in the grid region via a sequence of transpositions of the form 
\[(\alpha_i \cap \beta_j), (\alpha_j \cap \beta_k)\rightarrow (\alpha_j \cap \beta_j), (\alpha_i\cap \beta_k).\] 
Each transposition may be realized by a square domain containing no basepoints which connects the pairs of intersection points in the grid region, so the Alexander grading is preserved.   
\end{proof}

It is straightforward to compute the relative Alexander gradings of the pure generators in terms of contributions from the individual intersection points.  For each $i$, the pure intersection point with the highest Alexander grading is the one labeled $E_i$ in Figure~\ref{fig:lab}. For convenience, we assign each $E_i$ the vector $(0, \dots, 0)$.  To a pure intersection point $X_i$, we then assign the vector whose $j^{th}$ coordinate counts the difference in intersection numbers $n_{z_j}(\phi)-n_{w_j}(\phi)$ for a domain $\phi$ connecting $E_i$ to $X_i$ (Figure~\ref{fig:eig}).  This yields the following:
\[E_i\rightarrow (0, \dots, 0) \ \ \  \ \ \  \ \ \ G_i\rightarrow(\underbrace{1, \dots, 1}_{i-1}, \underbrace{0, \dots, 0}_{n-i+1}) \ \ \ \ \ \ \ \ \ \ I_i\rightarrow(1, \dots, 1)).\]

\begin{figure}
\begin{center}
\scalebox{.5}{\includegraphics{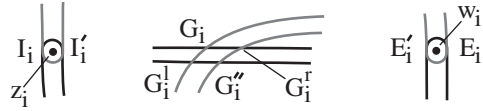}}
\end{center}
\label{HDEx}
\caption{Labels for the inner (I), grid (G), and exterior (E) pure intersection points.}\label{fig:lab} 
\end{figure}

In addition, each superscript ``$\ '\ $", ``$r$", or ``$l$" in the label  of $X_i$ adds a $1$ to the $i^{th}$ coordinate.  For example, if $n=4$, then $G''_2\rightarrow (1,2,0,0)$ and  $I'_4\rightarrow (1,1,1,2)$.  

Using these coordinates, define the working Alexander grading $A$ of a generator $\mathbb{X}$ to be the sum of the  gradings associated to its constituent intersection points.  Thus,  we have  $A(G''_2, E'_3,I'_4)=(2, 3, 2, 2)$ in the chain complex for $T_4$.

\begin{rem} These working coordinates are chosen for convenience, but we note that with this convention, the differential on the chain complex for $S^3\#^{n-1}(S^2\times S^1)$ is grading-increasing.  To recover the absolute Alexander gradings cited in Theorem~\ref{thm:alexsup}, set $\mathbb{E}=(E_2, \dots E_n)$ to have Alexander grading $(\frac{n-1}{2}, \dots, \frac{n-1}{2})$ and apply Equation~\ref{eq:grshift}.  It follows that the absolute Alexander grading of any other generator may be computed by subtracting its working grading from $A(\mathbb{E})$.  We will use the working coordinates throughout the remainder of the paper.
\end{rem}

With this notation in hand, we may assign each generator to a lattice point in $\mathbb{Z}^n$ and describe the geometry of the Alexander support of $\widehat{HFL}(T_n)$.

\begin{lemma}\label{lem:slice} Identify the Alexander gradings of the Heegaard Floer link homology groups with points in $\mathbb{Z}^n$ as described above.  In each hyperplane slice orthogonal to the main diagonal, only the lattice points  lying in the unit hypercube support non-trivial homology.  
\end{lemma}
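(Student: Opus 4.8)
The plan is to show that any generator whose working Alexander grading $A$ fails to lie in the unit hypercube on its hyperplane slice cannot contribute to the homology of the associated graded complex, because it can be cancelled by a differential within that Alexander grading. By Lemma~\ref{lem:squares}, it suffices to restrict attention to pure generators, so fix a pure generator $\mathbb{X}=(X_2,\dots,X_n)$ with $X_i\in\{E_i,G_i,I_i\}$ together with superscripts. Recording the working grading of $X_i$ as in the displayed formulas, the $j$th coordinate of $A(\mathbb{X})$ is obtained by counting, over all $i$, whether $X_i$ is of type $G$ (contributing $1$ for $j<i$), of type $I$ (contributing $1$ for all $j$), plus the number of superscripts on $X_i$ (contributing to coordinate $i$ only). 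First I would observe that, after the overall normalization by $\delta$ is fixed, the fact that a point lies on a hyperplane slice orthogonal to the main diagonal is just the statement that $\delta(A(\mathbb{X}))$ is fixed; the content of the lemma is that on each such slice the surviving gradings have all coordinates within a single unit interval, i.e.\ $\max_j A_j - \min_j A_j \le 1$.

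The key steps, in order, are as follows. (1) Classify the pure generators by the "shape" of the tuple $(X_2,\dots,X_n)$ ignoring superscripts: since the allowed coordinate vectors are $0$, the partial-diagonal vectors $(1,\dots,1,0,\dots,0)$, and $(1,\dots,1)$, the sum over all components of such vectors is automatically "staircase-shaped" — each coordinate $A_j$ differs from $A_{j+1}$ by at most the number of type-$G$ points $X_i$ with $i=j+1$ — so without superscripts the working grading already has $\max_j A_j - \min_j A_j$ controlled, and in fact small. (2) Show that superscripts are what can push a generator off the hypercube, and that whenever a generator carries a superscript there is an empty (basepoint-free) bigon or square domain, visible in Figure~\ref{fig:lab}, connecting it to a generator of strictly lower working grading in the \emph{same} Alexander grading for $\widehat{HFL}$ — wait, more precisely: such a domain changes the working grading but not the absolute Alexander grading, so it is a differential in the associated graded complex. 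Hence any generator with an offending superscript pattern is part of an acyclic summand and dies in homology. (3) Combine (1) and (2): the generators that can survive to homology are exactly those with superscript patterns consistent with staying in the unit hypercube, and for these $\max_j A_j - \min_j A_j \le 1$, which is the assertion that only lattice points in the unit hypercube on each slice support non-trivial homology.

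The main obstacle I expect is step (2): making precise which domains in the fixed Heegaard diagram of Figure~\ref{fig:tnhd} are basepoint-free, which pairs of generators they connect, and verifying that the resulting cancellations really are compatible with the associated-graded differential (i.e.\ that the relevant coefficient is nonzero mod $2$, which for embedded bigons and index-one squares it is). This is a finite but diagram-dependent check, and it is the place where one must actually look at the picture rather than argue combinatorially. A secondary subtlety is bookkeeping: one must be careful that a cancellation used to kill one offending generator does not also cancel a generator one wants to keep, so the cancellations should be organized as a filtered acyclic subcomplex (e.g.\ ordered by total number of superscripts, or by working grading along the slice) so that Gaussian elimination can be applied consistently. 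Once the acyclic piece is identified, the bound $\max_j A_j-\min_j A_j\le 1$ on what remains is immediate from the shape analysis in step (1).
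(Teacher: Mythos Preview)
Your proposal has a genuine gap, and the overall strategy is quite different from the paper's.

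First, step~(1) is simply false. Take $n=5$ and the pure generator $(G_2,G_3,G_4,G_5)$ with no superscripts at all. Summing the contributions
\[
(1,0,0,0,0)+(1,1,0,0,0)+(1,1,1,0,0)+(1,1,1,1,0)=(4,3,2,1,0),
\]
so $\max_j A_j-\min_j A_j=4$. Thus generators without superscripts can land far outside the unit hypercube; it is not the superscripts that push you off. This undermines the whole decomposition in steps~(1)--(3).

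Second, step~(2) is confused about gradings. The working grading and the absolute Alexander grading differ by a fixed affine transformation, so a domain preserves one if and only if it preserves the other. A basepoint-free domain does preserve the Alexander grading (that is exactly why it contributes to the associated-graded differential), but it cannot ``change the working grading but not the absolute Alexander grading'' as you write. So the mechanism you propose for cancelling off-hypercube generators is not well-defined as stated, and even if reformulated, you would need to exhibit an acyclic subcomplex at every off-hypercube lattice point---a substantial diagram chase.

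The paper avoids all of this with a much shorter argument that you are missing: the $S_n$ symmetry of $T_n$. Any permutation of the components of $T_n$ is realized by an ambient isotopy, so $\widehat{HFL}(T_n,(x_1,\dots,x_n))\cong\widehat{HFL}(T_n,(x_{\sigma(1)},\dots,x_{\sigma(n)}))$ for every $\sigma\in S_n$. Hence it suffices to check vanishing at \emph{one} representative of each $S_n$-orbit off the hypercube, namely the point with non-decreasing coordinates. At that representative one has $x_n-x_1\ge 2$, but a quick inspection of the intersection-point contributions shows that the only points with $n$th coordinate exceeding the first are $E_n'$, $G_n''$, $I_n'$, each contributing a difference of exactly $1$; since a generator uses at most one of these, no generator can have $x_n-x_1\ge 2$. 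So there are literally no generators at that lattice point, and the homology vanishes there---hence on the whole orbit. No cancellation argument is needed.
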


\begin{proof} 
We claim first that 
\[\widehat{HFL}(T_n,  (x_1, \dots, x_n))\cong \widehat{HFL}(T_n,  (\sigma(x_1), \dots, \sigma(x_n)))\] for any  $\sigma\in S_n$.  To see this, note that any permutation of the components of $T_n$ can be realized by ambient isotopy in $S_3$.  It follows that any permutation of the indices $\{1, \dots, n\}$ of the $(z_j, w_j)$ basepoint pairs yields a Heegaard diagram for an isotopic copy of $T_n$, and hence determines isomorphic  link homology groups.  Since the $j^{th}$ basepoint pair determines the $j^{th}$  coordinate of the Alexander grading, the $\mathbb{Z}$-graded homology groups at any pair of lattice points in the same $S_n$ orbit are isomorphic.
  
The lattice points in a hyperplane slice orthogonal to the main diagonal are characterized by their coordinates having a fixed sum, and a lattice point lies in the $c^{th}$ hypercube exactly when each of its coordinates is either $c$ or $c-1$.  For each lattice point $(x_1, \dots x_n)$ with the property that  some pair of coordinates differs by at least two, we claim there is a lattice point in the same $S_n$ orbit which has no generators.  It immediately follows that the homology must be trivial at every lattice point in the orbit.  

To establish the claim, we turn to the contributions of the pure intersection points computed above.  For a fixed hyperplane slice and orbit disjoint from the unit hypercube, consider the  unique lattice point whose coordinates form a non-decreasing sequence.  The only intersection points whose contributions satisfy $x_n>x_1$ are $E_n'$, $G_n''$ and $I_n'$. In each case $x_n-x_1=1$, and a single generator involves at most one of these intersection points.  However, a non-decreasing sequence in an orbit disjoint from the unit hypercube is characterized by $x_n-x_1\geq 2$, proving the claim.
\end{proof}
    
Let $v_{s,c,n}$ denote the lattice point in the $s^{th}$ hyperplane slice of the $c^{th}$ hypercube whose coordinates are non-decreasing ($2\leq s \leq n$, $1\leq c \leq n-1$):
\[ v_{s,c,n}= (\underbrace{c-1, \dots, c-1}_{n-s+1}, \underbrace{c, \dots, c}_{s-1}).\]

This distinguishes a unique point in each $S_n$ orbit.
    
\begin{lemma}\label{lem:vscn} The lattice point $v_{s,c,n}$ supports exclusively pure generators.  Furthermore, for each of these generators, the only intersection point which could belong to the grid region is $G_n''$. 
\end{lemma}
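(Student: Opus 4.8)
The plan is to prove the grid-region statement first and then to deduce from it, via Lemma~\ref{lem:squares}, that no generator supported at $v_{s,c,n}$ is mixed.

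\emph{Step 1: coordinate bookkeeping on $\alpha_i\cap\beta_i$.} A pure generator $\mathbb{X}$ chooses one pure intersection point $X_i\in\alpha_i\cap\beta_i$ for each $i\in\{2,\dots,n\}$, and its working Alexander grading is the sum of the vectors assigned to the $X_i$. The first task is to record, for every pure intersection point on $\alpha_i\cap\beta_i$, the difference $x_n-x_1$ between the last and first coordinates of its assigned vector. From the contributions listed just before the lemma, a point on $\alpha_i\cap\beta_i$ with $i<n$ has $x_n-x_1=0$ if it is of exterior or interior type and $x_n-x_1=-1$ if it is of grid type, the superscripts being irrelevant here since they change only the $i$-th coordinate and $2\le i\le n-1$. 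For $i=n$, the points with $x_n>x_1$ are exactly $E_n'$, $G_n''$, $I_n'$, each with $x_n-x_1=1$ — this is precisely the computation made in the proof of Lemma~\ref{lem:slice} — while every other point on $\alpha_n\cap\beta_n$ has $x_n-x_1\le 0$.

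\emph{Step 2: the only grid-region point that can appear is $G_n''$.} Because $2\le s\le n$, the first coordinate of $v_{s,c,n}$ is $c-1$ and its last coordinate is $c$. Hence for any pure generator $\mathbb{X}$ supported at $v_{s,c,n}$, the difference of the last and first coordinates of $v_{s,c,n}$, namely $c-(c-1)=1$, equals the sum over $i$ of the per-point differences $x_n-x_1$ recorded in Step 1. Every summand with $i<n$ is $\le 0$ and the summand with $i=n$ is $\le 1$, so equality forces the $i=n$ summand to equal $1$ and every $i<n$ summand to equal $0$. The first conclusion gives $X_n\in\{E_n',G_n'',I_n'\}$, so $X_n$ is in the grid region only if $X_n=G_n''$; by Step 1 the second conclusion gives that no $X_i$ with $i<n$ is in the grid region. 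Together these are the second assertion of the lemma.

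\emph{Step 3: every generator at $v_{s,c,n}$ is pure.} A generator uses each $\alpha$-curve and each $\beta$-curve exactly once, so it determines a bijection of $\{2,\dots,n\}$ sending the index of an $\alpha$-curve to the index of the $\beta$-curve met there, and its mixed intersection points are exactly the non-fixed indices. A bijection of a finite set with one non-fixed point necessarily has at least two, so a non-pure generator has at least two mixed intersection points; by Lemma~\ref{lem:squares} it then shares its Alexander grading with a pure generator carrying at least two grid-region intersection points. But Step 2 shows every pure generator at $v_{s,c,n}$ carries at most one such point, a contradiction. Hence no generator supported at $v_{s,c,n}$ is mixed, which is the first assertion.

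The only step requiring real care is Step 1: one must verify the claimed value of $x_n-x_1$ for \emph{every} pure intersection point on $\alpha_n\cap\beta_n$ — in particular that $E_n',G_n'',I_n'$ is the complete list with $x_n>x_1$ — and confirm that a grid point on $\alpha_i\cap\beta_i$ with $i<n$ genuinely forces $x_n<x_1$. Both facts are already contained in the proof of Lemma~\ref{lem:slice}, so once they are invoked the rest is immediate: Step 2 is a one-line inequality and Step 3 uses only the elementary fact that a permutation cannot move exactly one element.
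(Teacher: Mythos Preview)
Your proof is correct and follows essentially the same route as the paper: both arguments compare the first and last coordinates of the working Alexander grading, observe that only $E_n'$, $G_n''$, $I_n'$ contribute positively to $x_n-x_1$ (each by exactly $1$), conclude that no $G_i^*$ with $i<n$ can appear, and then invoke Lemma~\ref{lem:squares} together with the impossibility of a single mixed intersection point. Your Step~2 is actually a bit more thorough than the paper's version, since you explicitly pin down $X_n\in\{E_n',G_n'',I_n'\}$ and thereby exclude the remaining grid points on $\alpha_n\cap\beta_n$, a detail the paper leaves to the reader.
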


\begin{proof} 
We begin with the second claim. For $i<n$,  the Alexander grading vector associated to $G_i^*$ begins with a $1$ and ends with a $0$.  As noted in the proof of Lemma~\ref{lem:slice}, the only intersection points whose final coordinates are greater than their initial ones come from $\alpha_n\cap \beta_n$, and in each case, the difference is only $1$.  The only non-decreasing sequence of Alexander coordinates that could be associated to a generator involving some 
 $G_i^*$ for $i<n$ is therefore a constant sequence, in which case it cannot be $v_{s,c,n}$. 

The first part of the lemma now follows from Lemma~\ref{lem:squares}, since it is impossible to have a single mixed intersection point.
\end{proof}

\section{Homological grading}\label{sect:homgr}
In this section we focus on the homological grading.  

\begin{proposition}\label{prop:topgr} The generator $\mathbb{E}=(E_2, \dots, E_n)$ represents a non-trivial homology class in $\widehat{HFL}(S^3, T_n)$ with homological grading $0$. 
\end{proposition}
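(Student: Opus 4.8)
The plan is to treat the two assertions separately: that $\mathbb{E}$ is a cycle representing a non-zero class in $\widehat{HFL}(T_n)$, and that this class has homological grading $0$. The first is purely combinatorial; the second requires recognizing $\mathbb{E}$ as a distinguished generator of $\widehat{HF}\big(S^3\#^{n-1}(S^2\times S^1)\big)$.

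For the first assertion, I would show that $\mathbb{E}$ is the \emph{only} generator carrying the working Alexander grading $(0,\dots,0)$. Each intersection point contributes a coordinate-wise non-negative vector to the working grading: $E_i\mapsto(0,\dots,0)$, the grid points $G_i^{*}$ contribute a block of $1$'s followed by $0$'s, the interior points $I_i^{*}$ contribute $(1,\dots,1)$ or more, and each superscript adds $1$ to some coordinate; moreover, a generator containing a mixed point has, by Lemma~\ref{lem:squares}, the Alexander grading of a pure generator containing a grid point. Hence the only generator whose grading is $(0,\dots,0)$ is the one built entirely from the un-superscripted exterior points $E_2,\dots,E_n$, namely $\mathbb{E}$ itself; by Lemma~\ref{lem:slice} this grading is the extremal vertex of the first hypercube. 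Consequently the summand of the associated graded complex at this Alexander grading is one-dimensional; a Maslov-index-one disc cannot connect $\mathbb{E}$ to itself, so the differential on that summand vanishes and $\mathbb{E}$ is a cycle which is not a boundary there. Thus $[\mathbb{E}]\neq 0$.

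For the homological grading, I would forget the basepoints $z_1,\dots,z_n$, obtaining a Heegaard diagram for $Y:=S^3\#^{n-1}(S^2\times S^1)$ in which the $w$-Maslov grading of every generator is unchanged, and show that $\mathbb{E}$ represents the top-dimensional class of $\widehat{HF}(Y)$. Since the homological grading on $\widehat{HFL}$ is \emph{defined} by placing that class in degree $0$, this completes the proof. The identification rests on two facts I would read off the explicit diagram: (i) no Maslov-index-one domain avoiding all of $w_1,\dots,w_n$ originates at $\mathbb{E}$, so $\mathbb{E}$ is a cycle in $\widehat{CF}(Y)$; and (ii) $\mathbb{E}$ has strictly the largest Maslov grading of any generator of the diagram --- most conveniently checked by exhibiting, for each other generator $\mathbb{Y}$, a domain from $\mathbb{Y}$ to $\mathbb{E}$ of positive Maslov index. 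Given (i) and (ii), $\mathbb{E}$ is a cycle that cannot be a boundary and $\widehat{HF}(Y)$ vanishes above $\mathrm{gr}(\mathbb{E})$, so $\mathrm{gr}(\mathbb{E})$ is the top degree, which is $0$ by convention. (Equivalently, in the spectral sequence with $E^1=\widehat{HFL}(T_n)$ abutting to $\widehat{HF}(Y)$, the class $[\mathbb{E}]$ lies at the extreme of the Alexander filtration, hence is never hit by a differential; once one knows it supports no differential, it survives to $E_\infty$, necessarily in top grading.)

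The main obstacle is step (ii), together with the companion cycle check (i): the rest is formal, but showing that $\mathbb{E}$ occupies the very top of the Maslov grading is exactly where one must use the combinatorics of this particular, tractable diagram rather than formal properties of $\widehat{HFL}$. The relative Maslov gradings of the pure generators should be visible from small explicit domains between them, and mixed generators can be reduced to pure ones via the square domains of Lemma~\ref{lem:squares}; carrying this out carefully is the substance of the argument.
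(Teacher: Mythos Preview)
Your outline matches the paper's proof almost exactly: uniqueness of $\mathbb{E}$ in its Alexander grading gives non-triviality in $\widehat{HFL}$, the relative Maslov computations (what the paper packages as Lemma~\ref{lem:relgr}) show $\mathbb{E}$ is the top-graded generator, and survival to $E_\infty$ in the spectral sequence abutting to $\widehat{HF}\big(S^3\#^{n-1}(S^2\times S^1)\big)$ pins the absolute grading at $0$.

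The one place your expectation is off is step~(i). It is \emph{not} true that no Maslov-index-one domain avoiding the $w_i$ originates at $\mathbb{E}$: for each $i$ there is a pair of bigons from $\mathbb{E}$ to the generator obtained by replacing $E_i$ with $E_i'$, each crossing $z_i$ and missing all $w_j$. The paper's argument is that these two bigons cancel over $\mathbb{Z}_2$, and that any other domain connecting the same pair differs from one of these by a periodic domain and therefore has a region of negative multiplicity. So $\mathbb{E}$ is a cycle in $\widehat{CF}(Y)$ because of a mod-$2$ cancellation, not because the relevant moduli spaces are empty. This is the substantive content of the survival step; once you replace your version of (i) with this cancellation-plus-periodic-domain argument, your proof is the paper's.
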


The proof of this proposition appears at the end of the section.  In the meantime, it will be helpful to take a closer look at the generators at the lattice point $v_{s,c,n}$.  These split into three classes, each characterized by the pure intersection point $x_n$:

\

\[ \mathbb{X}\in \begin{cases} \mathcal{E}_{s,c,n}&\text{ if } x_n=E_n'\\
\mathcal{I}_{s,c,n}&\text{ if } x_n=I_n'\\
\mathcal{G}_{s,c,n}&\text{ if } x_n=G_n''.\end{cases}\]

In each class, the total number of $G$ and $I$ intersection points is $c-1$, and the remaining $n-c$ intersection points are in the $E$ region.  It follows from Lemma~\ref{lem:vscn} that there are $\bin{n-2}{c-1}$ generators in $\mathcal{E}_{s,c,n}$ and $\bin{n-2}{c-2}$ generators in each of $\mathcal{I}_{s,c,n}$ and $\mathcal{G}_{s,c,n}$. The ``s" parameter corresponds to the number of intersection points labeled with a prime.  

\begin{lemma}\label{lem:relgr} The generators of type $\mathcal{E}_{s,c,n}$ and $\mathcal{I}_{s,c,n}$ have homological grading $-c^2-s+2$ and the generators of type $\mathcal{G}_{s,c,n}$ have homological grading $-c^2-s+3$.
\end{lemma}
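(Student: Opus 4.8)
The plan is to determine each generator's homological grading by comparing it to $\mathbb{E}=(E_2,\dots,E_n)$, whose grading is $0$ by Proposition~\ref{prop:topgr}. By the construction of the homological grading on $\widehat{HFL}$ recalled in Section~\ref{sect:prelim} (see also \cite{OSz2008}), the grading difference between $\mathbb{X}$ and $\mathbb{E}$ is the Maslov index $\mu(\phi)$ of a domain $\phi\in\pi_2(\mathbb{X},\mathbb{E})$ in the Heegaard diagram for $S^3\#^{n-1}(S^2\times S^1)$ obtained by forgetting the $z$-basepoints, and this index is independent of the choice of $\phi$. So for a generator $\mathbb{X}$ supported at $v_{s,c,n}$ it is enough to exhibit one such $\phi$ and evaluate $\mu(\phi)=e(\phi)+n_{\mathbb{X}}(\phi)+n_{\mathbb{E}}(\phi)$.

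Every such $\mathbb{X}$ with $x_n\neq G_n''$, that is $\mathbb{X}\in\mathcal{E}_{s,c,n}\cup\mathcal{I}_{s,c,n}$, has exactly $c-1$ intersection points of type $I$ and exactly $s-1$ primed intersection points, and I would build $\phi$ from pieces reflecting this data, using the explicit diagram of Figure~\ref{fig:tnhd}. Each primed coordinate is unwound by a small piece, disjoint from everything else, contributing $-1$ to the index; each type-$I$ coordinate is returned to the exterior region by a piece sweeping through the interior region, each such piece a bigon or rectangle whose index one reads directly off the diagram. The crucial subtlety is that the interior pieces overlap one another: $e(\phi)$ is additive over the pieces, but the local multiplicities $n_{\mathbb{X}}+n_{\mathbb{E}}$ are not, so $\mu(\phi)$ exceeds the naive sum of piece-indices by a correction which the combinatorial formula pins down pair by pair. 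Adding the $c-1$ individual interior contributions (linear in $c$), the $\binom{c-1}{2}$ pairwise corrections (which supply the quadratic growth), and the $s-1$ contributions of $-1$ from the primed pieces, and then fixing the additive constant, yields $\mu(\phi)=-c^2-s+2$. For the remaining class, a generator of $\mathcal{G}_{s,c,n}$ is obtained from a generator of $\mathcal{I}_{s,c,n}$ by changing $x_n$ from $I_n'$ to $G_n''$; this preserves the Alexander grading (both contribute $(1,\dots,1,2)$) and is realized by a single bigon in $\alpha_n\cap\beta_n$, of Maslov index $1$, so $\mathcal{G}_{s,c,n}$ sits in homological degree $-c^2-s+3$.

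Two points remain. First, one must check that the grading computed in the second step does not depend on which $\alpha$--$\beta$ pairs carry the type-$I$ points and the primes within a fixed class; this is visible from the computation, since the count sees only the number of type-$I$ points, the number of primes, and the type of $x_n$, and it can also be seen by joining two such generators through the basepoint-free square domains of Lemma~\ref{lem:squares} and checking these have Maslov index $0$. Second, and this is where I expect the real work to be, the interior bookkeeping must be carried out carefully: identifying the precise pairwise correction between two overlapping interior pieces, handling the interaction of those pieces with the $E_n'$, $I_n'$, or $G_n''$ piece at the top coordinate, and pinning down the boundary effect that makes the quadratic term come out to $-c^2$ rather than $-(c-1)^2$ (and the linear term $-s+2$ rather than, say, $-s+1$). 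The isolated index computations, the tally of primes, and the within-class symmetry argument should all be routine once the explicit diagram is in hand.
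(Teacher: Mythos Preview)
Your approach is essentially the paper's: anchor at $\mathbb{E}$ via Proposition~\ref{prop:topgr}, account for each prime by a $-1$, build a domain out of local pieces for the type-$I$ coordinates, do the quadratic bookkeeping on the overlaps, and then pass from $\mathcal{I}_{s,c,n}$ to $\mathcal{G}_{s,c,n}$ via a single bigon. The paper organizes the middle step slightly differently---it first reduces to $s=2$, then factors each $E_i\to I_i$ piece through $G_i$ (an index-$1$ domain $E_i\to G_i$ followed by a $G_i\to I_i$ domain whose index depends on the other coordinates), and records the result as ``a generator with $k$ type-$I$ points has grading $-k^2-2k$''---but this is the same computation you are describing.

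One technical slip to correct: the relative homological grading is not $\mu(\phi)$ but $\mu(\phi)-2n_{\mathbf{w}}(\phi)$, and it is only this combination that is independent of the choice of $\phi$. On this genus-zero diagram the periodic domains meet the $w$-basepoints, so $\mu(\phi)$ alone is not well defined across choices of $\phi$. The paper's displayed index formula carries the $-2n_{\mathbf{w}}(\phi)$ term for exactly this reason, and the model $E_i\to I_i$ domains in Figure~\ref{fig:eig} do cross $w$-basepoints, so this correction is not merely cosmetic in the bookkeeping.
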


\begin{proof}
Assuming Proposition~\ref{prop:topgr}, it suffices to compute the relative gradings of $\mathbb{E}$ and the generators with Alexander grading $v_{s,c,n}$.  As usual, relative homological grading is determined by connecting the two generators by a domain and computing its Maslov index.  It is easy to check that adding a prime to the label of any intersection point lowers the homological grading by one, so it suffices to compute the homological gradings of generators in the second slice of each hypercube, i.e.,  for the generators with Alexander grading $v_{2,c,n}$.

 Recall that the Maslov index of a domain $\phi$ connecting $\mathbf{x}$ to $\mathbf{y}$ is computed by the following formula \cite{Lip}:
\begin{align}
\mu(\phi)&=n_{\mathbf{x}}(\phi)+n_{\mathbf{y}}(\phi)+e(\phi)\notag\\
&=n_{\mathbf{x}}(\phi)+ n_{\mathbf{y}}(\phi)-2n_{\mathbf{w}}(\phi)-k/4+l/4+\chi(\phi).\notag
\end{align}

Here, $n_*(\phi)$ denotes the (possibly fractional) local multiplicity of the tuple of points $*$ in $\phi$. The formula for the Euler measure $e(\mathcal{\phi})$ also takes into account the number of acute ($k$) and obtuse ($l$) corners of the domain with Euler characteristic $\chi(\phi)$.

The shaded region in Figure~\ref{fig:eig} shows a domain with two obtuse corners connecting $E_i$ to $G_i$; this intersects $E_j$ for $j>i$,  $w_j$ for $j\geq i$, and $I_j$ for all $j$.  It follows from the formula  above  that the Maslov index of this domain is  $1$.   The darkly shaded region in  Figure~\ref{fig:eig} shows a domain with one obtuse and one acute corner connecting $G_i$ to $I_i$; this intersects $I_j$ for all $j>i$, so the Maslov index of this domain depends on the other intersection points.  Some careful bookkeeping shows that if $\mathbb{X}$ is a generator with $k$ intersection points of type $I$ and the rest of type $E$, then the homological grading of $\mathbb{X}$ is $-k^2-2k$. Setting $k=c-1$ and subtracting one for the prime on the label of $x_n$, we see that all the generators of type $\mathcal{E}_{2,c,n}$ and $\mathcal{I}_{2,c,n}$ have homological grading $-c^2$.  Since each generator in $\mathcal{G}_{2,c,n}$ connects to a generator in $\mathcal{I}_{2,c,n}$ via a bigon disjoint from all the basepoints, the homological grading of each generator in $\mathcal{G}_{2,c,n}$ is $-c^2+1$. 

\end{proof}

 \begin{proof}[Proof of Proposition~\ref{prop:topgr}]
The proof of Lemma~\ref{lem:relgr} shows that $\mathbb{E}$ is the top-graded generator of $\widehat{CFL}(T_n)$.  This generator is unique in its Alexander grading, so it clearly represents a homology class in $\widehat{HFL}(T_n)$. As described in Section~\ref{sect:prelim}, the $z$-basepoint filtration induces a spectral sequence which converges to $\widehat{HF}\big(S^3\#^{n-1}(S^2 \times S^1)\big)$. Consequently, the only circumstance under which $\mathbb{E}$ could fail to represent a homology class in the $E^\infty$ page is if there exists some higher differential in the spectral sequence which doesn't send it to zero.  We claim that no such map can exist, 

The spectral sequence preserves homological gradings, so Lemma~\ref{lem:relgr} implies that any higher differential maps $\mathbb{E}$ to something in the span of the terms coming from the second slice of the first hypercube.  The sum of the Alexander coordinates of such terms differs from $A(\mathbb{E})$ by one, so we may restrict our attention to maps between $E^1$ terms.  

Observe that each lattice point in second slice of the first hypercube supports a unique generator.   Consequently, each of these lattice points supports one-dimensional homology on the $E^1$ page. The symmetry of the homology under the symmetric group action implies that any non-trivial $E^1$ differential from $[\mathbb{E}]$ would be induced by boundary discs in the $\widehat{HF}\big(S^3\#^{n-1}(S^2 \times S^1)\big)$ complex connecting $\mathbb{E}$ to the $n$ generators in the second slice of the first hypercube.    It therefore suffices to show that the differential does not map $\mathbb{E}$ to the generator $(E_2, E_3, \dots, E_{n-1}, E_n')$.  There are two bigon domains connecting this pair of generators, each of which crosses $z_n$ but no $w_i$ basepoints; these contributions to the boundary cancel since we are working with $\mathbb{Z}_2$ coefficients.  Any other domain differs from one of these by the addition of some periodic domain disjoint from the basepoints.  Enumerating these shows that any other domain has some region with negative multiplicity, a feature which precludes it from supporting a holomorphic disc.  We can therefore conclude that $[\mathbb{E}]$ persists in the homology of the $E^\infty$ page, and that it has homological grading zero.
\end{proof}

\subsection{Homology in the interior of a hypercube}
Theorem~\ref{thm:vhom}  describes the homology at lattice points in the interior of the hypercubes.  Here, we assemble results from the previous section to prove this theorem.

\begin{proof}[Proof of Theorem~\ref{thm:vhom}]
Lemma~\ref{lem:relgr} shows that $v_{s,c,n}$ supports generators in two consecutive homological gradings.  We show that the differential at $v_{s,c,n}$ is injective on the subcomplex spanned by the generators $\mathcal{G}_{s,c,n}$.  The theorem then follows from counting dimensions:
\[ \text{rank } \widehat{HFL}(T_n, v_{s,c,n})= |\mathcal{E}_{s,c,n}|  +|\mathcal{I}_{s,c,n}| -|\mathcal{G}_{s,c,n} |. \]

We claim that the sets $\mathcal{G}_{s,c,n}$ and $\mathcal{I}_{s,c,n}$  admit maps $N_{\mathcal{G}}$ and $N_{\mathcal{I}}$ to $\mathbb{N}$ with the property that the differential is weakly increasing with respect to these functions: if $N_{\mathcal{G}}(\mathbf{x})>N_{\mathcal{I}}(\mathbf{y})$, then $\mathbf{y}$ is not  a summand of $\partial \mathbf{x}$.  To prove this, we describe a model domain connecting $\mathbf{x}$ to $\mathbf{y}$ for each pair of generators in $\mathbf{x}\in\mathcal{G}_{s,c,n}$ and $\mathbf{y}\in\mathcal{I}_{s,c,n}$.  

We identify a point $p_\mathcal{\phi}$ on the Heegaard diagram  with the property that the multiplicity of the model domain $\mathcal{\phi}$ at $w_1$ is greater than its multiplicity at $p_\mathcal{\phi}$.  The point $p_\mathcal{\phi}$ is chosen so that any periodic domain has the same multiplicity at $p_\mathcal{\phi}$ as at $w_1$, so it follows that no domain disjoint from $w_1$ can have non-negative multiplicity in every region, a necessary condition for a domain to support a holomorphic representative.  

In order to define the functions $N_*$, we note that generators in $\mathcal{G}_{s,c,n}$ and $\mathcal{I}_{s,c,n}$ are distinguished by their choice of $I_i^*$ intersection points.  To a generator from either class,  associate a length $n-2$  binary sequence by replacing each $E_i^*$ by a $0$ and each $I_i^*$ by a $1$. 

There is a natural linear ordering on such sequences which is given by setting $S>T$ if, for the least $j$ such that $s_j\neq t_j$, $s_j=1$.  For example, 
\[ (1,1,0,1)>(1,0,1,1)>(0,1,1,1).\]
This ordering induces the maps \[N_{\mathcal{G}}: \mathcal{G}_{s,c,n}\rightarrow \{1, 2, \dots, \bin{n-2}{c-2}\}\text{ and }N_{\mathcal{I}}: \mathcal{I}_{s,c,n}\rightarrow \{1, 2, \dots, \bin{n-2}{c-2}\}.\] 

Observe that $N_{\mathcal{G}}(\mathbf{x})=N_{\mathcal{I}}(\mathbf{y})$ if the two generators differ only by replacing $G_n''$ with $I_n'$. This replacement is realized by a bigon disjoint from all the basepoints, so $\mathbf{y}$ is a summand of $\partial \mathbf{x}$ for all $\mathbf{x}\in  \mathcal{G}_{s,c,n}$.  Together with the fact that the differential is weakly increasing with respect to the $N_*$, which is proved in the next paragraph, this proves that the differential is injective  on the subcomplex spanned by the generators in $\mathcal{G}_{s,c,n}$.

For each $i<n$, Figure~\ref{fig:eig} indicates a domain that connects $E_i$ to $I_i$ and one that connects $I_i$ to $E_i$.  Domains of the first type have multiplicity zero at $w_1$, whereas domains of the second type have multiplicity two at $w_1$; both types of domains have multiplicity one at the point $p_i$ lying just to the left of $\beta_j$ near the exterior intersection region, as indicated in Figure~\ref{fig:eig}.  For each pair of generators $\mathbf{x}\in \mathcal{G}_{s,c,n}$ and $\mathbf{y} \in \mathcal{I}_{s,c,n}$,  define the domain $\phi$ from $\mathbf{x}$ to $\mathbf{y}$ as the union of the model domains associated to each pair of intersection points where they differ.   If the least $j$ for which $x_j\neq y_j$ has $x_j=I_j^*$, pick $p_\mathcal{\phi}=p_j$.   With this choice,  the domain satisfies $0<n_{p_\mathcal{\phi}}(\mathcal{\phi})<n_{w_1}(\mathcal{\phi})$.  As described above, this implies the desired result that there can be no holomorphic disc connecting $\mathbf{x}$ to $\mathbf{y}$.    
\end{proof}

\begin{figure}
\begin{center}
\scalebox{.5}{\includegraphics{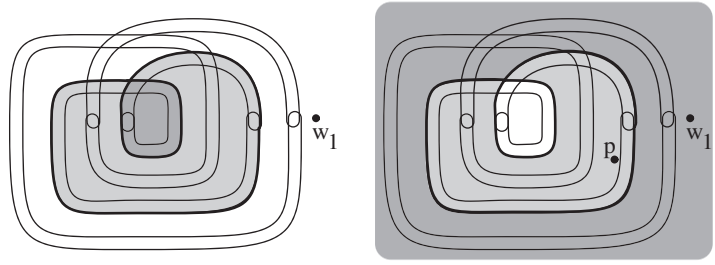}}
\end{center}
\caption{Left: Domain connecting $E_i$ to $I_i$.  Right: Domain connecting $I_i$ to $E_i$. In the lightly shaded areas, the domain has multiplicity one, and in the heavily shaded areas, it has multiplicity two. The diagram on the left shows the two special domains described in the proof of Lemma~\ref{lem:relgr} superimposed.}\label{fig:eig}
\end{figure}

\section{Extremal vertices of hypercubes}\label{sect:vhard}

The computations in the interior of each hypercube are simplified by symmetry considerations which allow one to disregard most generators with intersection points in the grid diagram.  This is no longer the case for Alexander gradings appearing as  extremal vertices of the hypercubes.  Explicit computation of the homology is feasible for the vertices where the first few hypercubes meet, and symmetry of the complex then determines the homology where the last few hypercubes meet.  Nevertheless, a more systematic approach to the homology at the intersection of two hypercubes remains elusive.    Instead, we offer a conjecture about the homology at these lattice points and describe both experimental and inductive support for it.

Denote the lattice point where the $c^{th}$ and $(c+1)^{th}$ hypercubes meet by $v_{C,n}$.

\begin{conj1}
 When $C \le \frac{n}{2}$
\[
\widehat{HFL}(T_n, v_{C,n})\cong
      \bigoplus_{i=0}^C {\bin{n-1}{i}} \F_{-C^2-C-i}\oplus \bigoplus_{j=0}^{C-1} {\bin{n-1}{j}}\F_{-C^2-C-n+2+j} .
\]
\end{conj1}

\subsection{The homology at $v_{1,n}$} 
We prove the following special case of Conjecture~\ref{conj:hard}, in part to illustrate our approach to Theorem~\ref{thm:hard}. As before, let $r{\F}_{g}$ denote rank $r$ homology supported in homological degree $g$.

\begin{proposition}\label{prop:v1}
For $n\geq 3$, 

\[   \widehat{HFL}(T_n,  v_{1,n})\cong \mathbb{F}_{-2}\oplus (n-1) \mathbb{F}_{-3}\oplus \mathbb{F}_{-n}.
\]  
\end{proposition}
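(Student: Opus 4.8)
The plan is a direct computation from the explicit Heegaard diagram for $T_n$: list the generators of $\widehat{CFL}(T_n)$ at Alexander grading $v_{1,n}$, record their homological gradings, and determine the part of the differential that preserves this Alexander grading. In working coordinates $v_{1,n}$ is the all-ones vertex $(1,\dots,1)$, where the first two hypercubes meet. First I would enumerate the generators. Since the first coordinate of the working grading of a pure generator equals the number of its intersection points of type $G$ or $I$, a generator at $(1,\dots,1)$ uses exactly one such point, with all others of type $E$; deciding which superscripts are then forced produces the list. One family consists of the generators $\mathbb I^{(j)}=(E_2,\dots,E_{j-1},I_j,E_{j+1},\dots,E_n)$ for $2\le j\le n$, carrying no superscripts; the other is built from a single $G_j$ with one superscript together with one superscript on each $E_i$ with $i>j$. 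Lemma~\ref{lem:squares} then removes the mixed generators, since a generator with $k$ mixed points would force a pure generator at $(1,\dots,1)$ with at least $k$ grid-region points, the list contains none with more than one, and $k=1$ is impossible.

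Next I would pin down the homological gradings exactly as in the proof of Lemma~\ref{lem:relgr}: normalizing by $\mathbb E$ in degree $0$ (Proposition~\ref{prop:topgr}), connect each generator to $\mathbb E$ through a union of the explicit $E_i\to G_i$ and $E_i\to I_i$ domains of Figure~\ref{fig:eig}, apply the Lipshitz Maslov-index formula, and subtract one for each superscript. The $\mathbb I^{(j)}$ all land in degree $-3$, which accounts for the $(n-1)\F_{-3}$ summand; the $G$-type generators are spread over degrees $-2,-3,\dots,-n$, and in particular the two extremal ones sit in degrees $-2$ and $-n$, giving the $\F_{-2}$ and $\F_{-n}$ summands.

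The heart of the matter is the differential on the associated-graded piece at $(1,\dots,1)$. Its nonzero components come from bigon and square domains disjoint from all basepoints joining adjacent generators of the two families, and these are straightforward to list. To rule out every other component I would reuse the obstruction from the proof of Theorem~\ref{thm:vhom}: for a putative domain $\phi$ between two such generators, exhibit a point $p_\phi$ at which every periodic domain has the same multiplicity as at $w_1$ and at which the model domain satisfies $0<n_{p_\phi}(\phi)<n_{w_1}(\phi)$; since the associated-graded differential only counts domains with $n_{w_1}=0$, such a $\phi$ must have a region of negative multiplicity and supports no holomorphic disc. Feeding the resulting description of $\partial$ into a rank count, degree by degree, then yields $\widehat{HFL}(T_n,v_{1,n})\cong\F_{-2}\oplus(n-1)\F_{-3}\oplus\F_{-n}$. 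A second route, which previews the proof of Theorem~\ref{thm:hard}, would be to induct on $n$: split the generators at $v_{1,n}$ by the type of their $\alpha_n\cap\beta_n$ point, recognize the pieces via the basepoint-forgetting spectral sequence of Section~\ref{sect:prelim} as built from the complexes at $v_{1,n-1}$ and $v_{0,n-1}$, and apply the $n-1$ case together with $\widehat{HFL}(T_{n-1},v_{0,n-1})\cong\F_0$.

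The main obstacle is controlling $\partial$ precisely. The positive half — nonzero maps from basepoint-free bigons — is routine, but the vanishing half requires a careful enumeration of the periodic domains and verification of the multiplicity inequality at each witness point, after which one must still argue that nothing else can cancel. Getting the enumeration above exactly right is the other delicate point: the superscript variants on the $E$- and $G$-points must be counted correctly and their homological gradings distinguished, since the interior simplifications of Lemma~\ref{lem:vscn} do not apply at an extremal vertex and the answer is sensitive to miscounts of bounded size.
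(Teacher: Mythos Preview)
Your primary route---a direct chain-level computation at $v_{1,n}$---is genuinely different from what the paper does.  The paper proves Proposition~\ref{prop:v1} by an inductive argument that uses \emph{two} instances of the basepoint-forgetting spectral sequence together with the grading symmetry of Equation~\ref{eq:grshift}.  First it analyzes the three-term spectral sequence over $v_{n-2,n-1}$ (with middle term $\widehat{HFL}(T_n,v_{n-2,n})$) and, after applying the symmetry, obtains the upper bound
\[
\widehat{HFL}(T_n,v_{1,n})\subseteq (n-2)\F_{-3}\oplus\F_{-n}\oplus k\bigl[\F_{-3}\oplus\F_{-2}\bigr],\qquad k\in\{0,1\}.
\]
Then, assuming the statement for $n-1$, it analyzes the spectral sequence over $v_{1,n-1}$ (with middle term $\widehat{HFL}(T_n,v_{1,n})$) and extracts the lower bound $\F_{-2}\oplus(n-1)\F_{-3}\oplus\F_{-n}$.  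Comparing the two forces $k=1$ and no excess, finishing the induction.

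Your ``second route'' is only half of this.  A single forgetful spectral sequence over $v_{1,n-1}$ cannot by itself pin down the middle term: any extra summand of the form $\F_{d}\oplus\F_{d-1}$ could be killed by $d^1$ against the outer terms $\F_{1-n}$ or $(n-2)\F_{-4}$, so one only gets a lower bound.  The dual computation at the symmetric vertex $v_{n-2,n}$ is precisely what supplies the missing upper bound, and it is the step your sketch omits.

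As for the direct computation, the paper remarks that it is feasible at $v_{1,n}$, so your strategy is not wrong in principle; but you understate the work.  Lemma~\ref{lem:vscn} (which restricts grid points to $G_n''$) applies only at the interior lattice points $v_{s,c,n}$, not at the extremal vertex, so at $(1,\dots,1)$ grid points $G_j^{*}$ for every $j$ can appear, and each carries several one-superscript variants ($'$, $l$, $r$); your list of $G$-type generators is therefore incomplete.  Correspondingly, the positivity/periodic-domain obstruction you borrow from the proof of Theorem~\ref{thm:vhom} was tailored to the situation where only $G_n''$ occurs, and it does not transfer verbatim to this larger collection of generators.  A correct direct proof would need a fresh enumeration and a case-by-case analysis of the differential, which is exactly the complexity the paper sidesteps with the two-spectral-sequence argument.
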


As described in Section~\ref{sect:prelim}, each  component of $L$ induces a $\mathbb{Z}$ filtration on the underlying complex for the manifold; ignoring the filtration induced by the component $L_i$ yields a complex which computes $\widehat{HFL}\big(S^3\#(S^2\times S^1), L\setminus L_i\big)$. This complex is filtered chain homotopic to
\[ \widehat{CFL}(S^3, L\setminus L_i) \otimes ({\F}_{0} \oplus {\F}_{-1}).\]

Deleting a single component from $T_n$ leaves the link $T_{n-1}$, so this relationship provides a useful tool for studying the homology of various torus links simultaneously.

Let $f_i:\mathbb{Z}^n\rightarrow \mathbb{Z}^{n-1}$ be the forgetful map that collapses the $i^{th}$ coordinate of the  filtration on $\widehat{CFL}(S^3, T_n)$.  For any lattice point $\mathbf{v}\in \mathbb{Z}^n$, the generators at $f_i(\mathbf{v})$ may be viewed as a filtered subcomplex of $\widehat{CFL}(S^2\times S^1, T_{n-1})$.  Thus, there is a spectral sequence whose $E^1$ terms are $\widehat{HFL}(S^3, T_n, \mathbf{u})$ for $\mathbf{u}\in f^{-1}_i\big(f_i(\mathbf{v})\big)$ and which converges to
 \[\widehat{HFL}\big(S^3, T_{n-1}, f_i(\mathbf{v})\big) \otimes ({\F}_{0} \oplus {\F}_{-1}).\]
We analyze this spectral sequence for $\mathbf{v}=v_{1,n}$ in order to prove Proposition~\ref{prop:v1} and then generalize the argument in the next section to prove Theorem~\ref{thm:hard}.

\begin{proof}
Proposition~\ref{prop:topgr} proved that $\widehat{HFL}(T_n, v_{0,n})$ is supported in degree zero for any $n$, so $f_{i}(v_{0,n})= v_{0, n-1}$.  It follows that the $c^{th}$ hypercube of $\widehat{HFL}(T_n)$ projects to the $c^{th}$ hypercube of $\widehat{HFL}(T_{n-1})$; Figure~\ref{fig:proj} illustrates this relationship schematically.

\begin{figure}[h!]
\begin{center}
\scalebox{.7}{\includegraphics{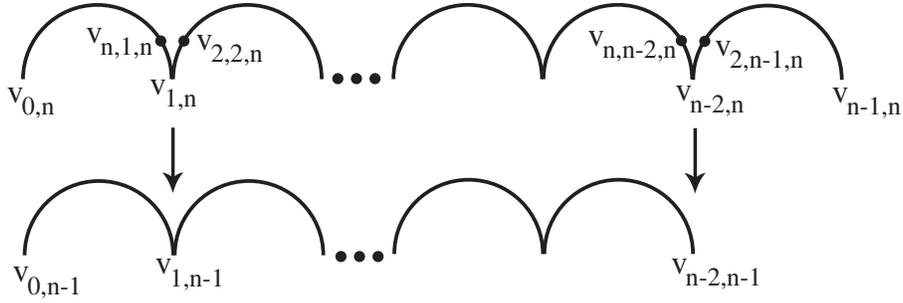}}
\end{center}
\label{HDEx}
\caption{ The forgetful map projects $\widehat{HFL}(T_n)$ to $\widehat{HFL}(T_{n-1})$.  Here, each arc represents a single hypercube, and the working Alexander grading increases to the right; the direction of the filtered boundary map agrees with the direction of the arrows in the short exact sequences below.
}\label {fig:proj}
\end{figure}

According to Equation~\ref{eq:grshift} in Section~\ref{sect:prelim},  the homology at either one of $v_{x,n}$ or $v_{n-1-x,n}$ determines the other.  In the symmetrized coordinates, $\mathbb{E}_n$ has Alexander grading $(\frac{n-1}{2}, \dots, \frac{n-1}{2})$, so Equation~\ref{eq:grshift} implies that  the one-dimensional group $\widehat{HFL}(T_n, v_{n-2, n-1})$ is supported in degree $-n^2+3n-2$.

The map $f_i$ sends the lattice point $v_{n-2, n}$ to  $v_{n-2, n-1}$, and the $f_i$-preimage of $v_{n-2, n-1}$  consists of the three points $v_{n,n-2,n}$, $v_{n-2, n}$, and $v_{2, n-1,n}$.  Consequently, there exists a three-term spectral sequence converging by the $E^3$ page to 
\begin{align}
 \widehat{HFL}&(T_{n-1}, v_{n-2, n-1})\otimes ({\F}_{0}\oplus {\F}_{-1})\notag\\
 &\cong  {\F}_{-n^2+3n-2}\otimes ({\F}_{0}\oplus {\F}_{-1})\notag\\
 &\cong  {\F}_{-n^2+3n-2}\oplus  {\F}_{-n^2+3n-3}.\notag\end{align}
 
 Theorem~\ref{thm:vhom} allows us to make the $E^1$ page of this spectral sequence more explicit:
 \[
 (n-2){\F}_{-n^2+3n-2} \rightarrow \widehat{HFL}(T_n, v_{n-2,n}) \rightarrow {\F}_{-n^2+2n-1} 
 \]
 Thus,
 
         \[\widehat{HFL}(T_n, v_{n-2,n}) =(n-2){\F}_{-n^2+3n-3} \oplus {\F}_{-n^2+2n} \oplus k\big[{\F}_{ -n^2+3n-2} \oplus  {\F}_{-n^2+3n-3 }   \big], 
         \]
         where $k=0$ or $1$.
        
Applying Equation~\ref{eq:grshift} again, this implies 
\begin{equation}\label{eq:a1}
\widehat{HFL}(T_n, v_{1,n}) =(n-2){\F}_{-3} \oplus {\F}_{-n} \oplus k\big[{\F}_{ -3} \oplus  {\F}_{-2}   \big].
\end{equation}

Suppose that the Proposition holds for $v_{1, n-1}$,  $n> 3$.  Then the $E^1$ sequence
\[
{\F}_{1-n} \rightarrow \widehat{HFL}(T_n, v_{1,n}) \rightarrow (n-2){\F}_{-4}
\]

converges to 
\[ \widehat{HFL}(T_{n-1}, v_{1, n-1})\otimes ({\F}_{0}\oplus {\F}_{-1})\cong {\F}_{-2}\oplus {\F}_{1-n} \oplus (n-1){\F}_{-3} \oplus {\F}_{-n}\oplus (n-2){\F}_{-4}.\]

  Thus
\[
\widehat{HFL}(T_n, v_{1,n})= {\F}_{-2}\oplus (n-1){\F}_{-3} \oplus {\F}_{-n} \oplus X.
\]
Although we could be more precise about the possible ranks and gradings for $X$, comparison with  Equation~\ref{eq:a1} shows that in fact, $X=\emptyset$.  This proves that
\[
\widehat{HFL}(T_n, v_{1,n})= {\F}_{-2} \oplus (n-1) {\F}_{-3} \oplus {\F}_{-n}
\] and completes the inductive step.   

For small $n$, the proposition may be verified by direct computation, so we take $n=4$ as a sufficient base case for the induction.  
\end{proof}

\subsection{Proof of Theorem~\ref{thm:hard}}
The proof of Theorem~\ref{thm:hard} generalizes the argument used in the previous section.  We study the pair $\widehat{HFL}(T_n, v_{C,n})$ and $\widehat{HFL}(T_n, v_{n-1-C, n})$ simultaneously, in each case using the spectral sequence which converges to the corresponding terms of $\widehat{HFL}(T_{n-1})$.  

Theorem~\ref{thm:hard} is stated with the ambiguous caveat, ``For sufficiently large $n-1$\dots".  We note that this condition arises from  analyzing  short exact sequences as in the proof of Proposition~\ref{prop:v1}.  In order to rule out certain cancellations, $n$ must be large enough so that the polynomials that appear in the computation do not coincidentally evaluate to consecutive integers;  in the  calculations which follow, we assume such a sufficiently large $n$.  

\begin{proof}
Fix $C\leq \frac{n-1}{2}$. 
The spectral sequence 
\[ \widehat{HFL}(T_n, v_{n, n-C-1, n})\rightarrow \widehat{HFL}(T_n, v_{n-1-C, n})\rightarrow \widehat{HFL}(T_n, v_{2, n-C, n})   \]  converges to $\widehat{HFL}(T_{n-1}, v_{n-1-C, n-1})$.  Applying Theorem~\ref{thm:vhom}, we rewrite this as
\begin{align}
\bin{n-2}{n-C-2}{\F}_{-C^2-2C+2nC-n^2+n+1} & \rightarrow \widehat{HFL}(T_{n}, v_{n-1-C, n})\label{eq:spec}\\
& \rightarrow \bin {n-2}{n-C-1}{\F}_{-C^2+2nC-n^2}.\notag
\end{align}

By hypothesis, Conjecture~\ref{conj:hard} holds for $v_{C-1, n-1}$ so Equation~\ref{eq:grshift} implies   
\begin{align}
\widehat{HFL}&(T_{n-1}, v_{n-1-C, n-1}) \notag\\
&\cong 
\bigoplus_{i=0}^{C-1}\bin{n-2}{i} {\F}_{-C^2-C-n^2+n+2nC-i}\ \oplus 
 \bigoplus_{j=0}^{C-2} \bin{n-2}{j} {\F}_{-C^2-C-n^2+2nC+3+j}.\notag
\end{align}

We note the elementary identity $\bin{n-2}{i}+\bin{n-2}{i+1}=\bin{n-1}{i+1}$.  Thus the spectral sequence in Line~\ref{eq:spec}  converges to 
\begin{align}\label{eq:projhom}
\bigoplus_{i=0}^{C-1} \bin{n-1}{i} {\F}_{-C^2-C+2nC-n^2+n-i} \oplus \bigoplus_{j=0}^{C-2} \bin{n-1}{j} {\F}_{-C^2-C+2nC-n^2+2+j}\notag\\
 \oplus \bin{n-2}{C-1}{\F}_{-C^2-2C+2nC-n^2+n} \oplus \bin{n-2}{C-2} {\F}_{-C^2-n^2+2nC+1}.
\end{align}

Lines~\ref{eq:spec} and \ref{eq:projhom} imply that the the homology at $v_{n-1-C, n}$ consists of at least the terms in Line~\ref{eq:projhom}.  If the $E^2$ terms were known to be stable, then in fact $\widehat{HFL}(T_n, v_{n-1-C, n})$ would contain the summands predicted by Conjecture~\ref{conj:hard}: 
\[
\bigoplus_{i=0}^{C} \bin{n-1}{i} {\F}_{-C^2-C+2nC-n^2+n-i} \oplus \bigoplus_{i=0}^{C-1} \bin{n-1}{j} {\F}_{-C^2-C+2nC-n^2+2+j}.
\]

This implies that an upper bound for the homology at $v_{C,n}$ is given by
\begin{equation}\label{eq:bnds}
\bigoplus_{i=0}^{C} \bin{n-1}{i} {\F}_{-C^2-C-i} \oplus \bigoplus_{j=0}^{C-1} \bin{n-1}{i} {\F}_{-C^2-C-n+2+j}.
\end{equation}
Note that this is the homology predicted by Conjecture~\ref{conj:hard}.

Repeating the same procedure with $v_{C,n}$ and $v_{C,n-1}$ shows that the exact sequence
\begin{equation}\label{eq:seq}
\bin{n-2}{C-1}{\F}_{-C^2-n+2} \rightarrow \widehat{HFL}(T_n, v_{C,n}) \rightarrow \bin{n-2}{C} {\F}_{-C^2-2C-1}
\end{equation}
converges by the $E^3$ page to
\begin{align}
\bigoplus_{i=0}^C \bin{n-1}{i} {\F}_{-C^2-C-i} \oplus \bigoplus_{j=0}^{C-1} \bin{n-1}{j} {\F}_{-C^2-C-n+2+j}\notag\\
\oplus \bin{n-2}{C}{\F}_{-C^2-2C-1} \oplus \bin{n-2}{C-1}{\F}_{-C^2-n+2}.\notag\end{align}

It is immediately clear that $\widehat{HFL}(T_n, v_{C,n})$ must have at least the first two terms as summands, and in fact, this matches exactly with the maximum possible homology allowed by the bounds from  Line~\ref{eq:bnds}.  The remaining two summands agree precisely with the extreme terms of the short exact sequence in Line~\ref{eq:seq}.  Thus, $\widehat{HFL}(T_n, v_{C,n})$ satisfies Conjecture~\ref{conj:hard}.
    
    \end{proof}

\bibliographystyle{amsplain} 
\bibliography{torus}

\providecommand{\bysame}{\leavevmode\hbox to3em{\hrulefill}\thinspace}
\providecommand{\MR}{\relax\ifhmode\unskip\space\fi MR }
\providecommand{\MRhref}[2]{%
  \href{http://www.ams.org/mathscinet-getitem?mr=#1}{#2}
}
\providecommand{\href}[2]{#2}
\begin{thebibliography}{1}

\bibitem{Hed1}
Matthew Hedden, \emph{On knot {F}loer homology and cabling}, Algebr. Geom.
  Topol. \textbf{5} (2005), 1197--1222. \MR{2171808 (2006m:57042)}

\bibitem{Hed2}
\bysame, \emph{On knot {F}loer homology and cabling. {II}}, Int. Math. Res.
  Not. IMRN (2009), no.~12, 2248--2274. \MR{2511910 (2011f:57015)}

\bibitem{Hom}
J.~Hom, \emph{Bordered {H}eegaard {F}loer homology and the tau-invariant of
  cable knots}, Preprint available as arxiv:1202.1463, 2012.

\bibitem{Juh}
A.~Juh\'asz, \emph{Cobordisms of sutured manifolds}, Preprint available as
  arxiv:0910.4382, 2009.

\bibitem{Lip}
Robert Lipshitz, \emph{A cylindrical reformulation of {H}eegaard {F}loer
  homology}, Geom. Topol. \textbf{10} (2006), 955--1097. \MR{2240908
  (2007h:57040)}

\bibitem{OStip}
Peter Ozsv{\'a}th and Andr{\'a}s~I. Stipsicz, \emph{Contact surgeries and the
  transverse invariant in knot {F}loer homology}, J. Inst. Math. Jussieu
  \textbf{9} (2010), no.~3, 601--632. \MR{2650809}

\bibitem{OSz2004b}
Peter Ozsv{\'a}th and Zolt{\'a}n Szab{\'o}, \emph{Holomorphic disks and knot
  invariants}, Adv. Math. \textbf{186} (2004), no.~1, 58--116. \MR{2065507
  (2005e:57044)}

\bibitem{OSz2004a}
\bysame, \emph{Holomorphic disks and three-manifold invariants: properties and
  applications}, Ann. of Math. (2) \textbf{159} (2004), no.~3, 1159--1245.
  \MR{2113020 (2006b:57017)}

\bibitem{OSz2008}
\bysame, \emph{Holomorphic disks, link invariants and the multi-variable
  {A}lexander polynomial}, Algebr. Geom. Topol. \textbf{8} (2008), no.~2,
  615--692. \MR{2443092 (2010h:57023)}

\end{thebibliography}

\end{document}